\newtheorem{theorem}{Theorem}[section]   
\newtheorem{lemma}[theorem]{Lemma}      
\newtheorem{corollary}[theorem]{Corollary}
\newtheorem{remark}[theorem]{Remark}
\theoremstyle{definition}               
\newtheorem*{ntheorem}{\normalfont\bfseries Theorem}
\newtheorem*{ndefinition}{\normalfont\bfseries Definition}
\def\R{\mathbb{R}}
\def\d{\,\mathrm{d}}
\def\l{\left(}
\def\r{\right)}
\DeclareMathOperator*{\esssup}{ess\,sup}
\DeclareMathOperator*{\dist}{dist}
\title[]{A Unified H\"older Lebesgue Framework for Caffarelli Kohn Nirenberg Inequalities}  
\author{Mengxia Dong}
\address{Division of General Education, Shenzhen University of Advanced Technology, Shenzhen 518083, Guangdong, China}
\email{dongmengxia@suat-sz.edu.cn}
\numberwithin{equation}{section}  
\begin{document}



\subjclass[2020]{46E35, 46B70, 35A23, 26D10}

\keywords{Hardy inequality, Caffarelli-Kohn-Nirenberg inequality, H\"older spaces, interpolation}

\begin{abstract}
    We develop a unified H\"older--Lebesgue scale \(X^p\) and its weighted, higher–order variants \(X^{k,p,a}\) to extend the Caffarelli--Kohn--Nirenberg (CKN) inequality beyond the classical Lebesgue regime. Within this framework we prove a two–parameter interpolation theorem that is continuous in the triplet \((k,1/p,a)\) and bridges integrability and regularity across the Lebesgue–Hölder spectrum. As a consequence we obtain a generalized CKN inequality on bounded punctured domains \(\Omega\subset\R^n\setminus\{0\}\); the dependence of the constant on \(\Omega\) is characterized precisely by the (non)integrability of the weights at the origin. At the critical endpoint \(p=n\) we establish a localized, weighted Brezis--Wainger–type bound via Trudinger--Moser together with a localized weighted Hardy lemma, yielding an endpoint CKN inequality with a logarithmic loss. Sharp constants are not pursued; rather, we prove existence of constants depending only on the structural parameters and coarse geometry of \(\Omega\). Several corollaries, including a unified Hardy--Sobolev inequality, follow from the same interpolation mechanism.
\end{abstract}

\maketitle

\tableofcontents

\section{Introduction}

It is well known that for a function $u \in C_c^\infty(\R^n)$ and an exponent $1 \le p < n$, the classical Sobolev embedding theorem asserts that
\begin{equation}\label{SobolevInequality}
    \|u\|_{L^{p^\ast}}\le C\|D u\|_{L^p},
\end{equation}
where $p^\ast=\frac{np}{n-p}$ is the Sobolev conjugate exponent, and $C=C(n,p)>0$ is a constant independent of $u$. In the supercritical case $p>n$, Morrey’s inequality ensures that $u$ is H\"older continuous with exponent $1-\frac{n}{p}$, and satisfies
\begin{equation}\label{MorreyInequality}
    \|u\|_{C^{0,1-\frac{n}{p}}}\le C\|D u\|_{L^p}.
\end{equation}
These classical results can be found in standard references such as \cite{brezis2011functional} or \cite{evans2010partial}.

Function spaces play a central role in the analysis of partial differential equations (PDEs) and functional analysis, as they characterize the key analytic properties of solutions. Among them, Sobolev spaces $W^{k,p}$ constitute one of the most fundamental classes, encoding both the regularity and the integrability of functions. The Sobolev embedding theorems precisely quantify the relationship between these two aspects.

In contrast, Lebesgue spaces $L^p$ measure only integrability, whereas H\"older spaces $C^{k,\alpha}$ emphasize regularity. Although distinct in definition, both regularity and integrability describe the way functions vary. Motivated by this connection, Nirenberg introduced an extension of H\"older spaces by formally allowing the exponent $p$ to take negative values ($p<0$) \cite{nirenberg2011elliptic}. This construction provides a unified framework that bridges these two classical function spaces: for $p>0$ one recovers the usual Lebesgue spaces $L^p$, while for $p<0$ one obtains norms of H\"older type. 

Here is the notation:
\begin{equation*}
    |u|_p=[u]_{C^{p_1,p_2}}
    =\sum_{|\alpha|=p_1}\sup_{x\ne y}\frac{|D^\alpha u(x)-D^\alpha u(y)|}{|x-y|^{p_2}},
\end{equation*}
where $p_1=\lfloor -\tfrac{n}{p}\rfloor$ and $p_2=-\tfrac{n}{p}-p_1$.

Under Nirenberg's notation, a direct computation for $p>n$ yields
\begin{align*}
    (p^\ast)_1&=\left[ -\frac{n}{p^\ast}\right]=\left[ 1-\frac{n}{p}\right]=0, \\
    (p^\ast)_2&=-\frac{n}{p^\ast}-(p^\ast)_1=1-\frac{n}{p}.
\end{align*}
Hence
\begin{equation}\label{MorreyInequalitySeminorm}
    |u|_{p^\ast}=[u]_{C^{0,\,1-\frac{n}{p}}}\le C\|D u\|_{L^p}.
\end{equation}
Inequality \eqref{MorreyInequalitySeminorm}, which can be viewed as the seminorm formulation of Morrey’s inequality \eqref{MorreyInequality}, makes its resemblance to the Sobolev inequality \eqref{SobolevInequality} apparent. This observation highlights Nirenberg’s notation as a unifying framework, bringing Sobolev and H\"older spaces into a common setting.

Sobolev and H\"older spaces may be regarded as belonging to a single scale of spaces parameterized by $p$. This naturally raises the question of whether functional inequalities valid in Sobolev spaces extend to this broader framework. In fact, both the Sobolev and Morrey inequalities admit a unified formulation in this setting, which also extends to higher-order derivatives. Further details may be found in \cite{dong2024gagliardo}.

The Gagliardo–Nirenberg inequality admits a similar generalization. It was first established in the works of \cite{gagliardo1959ulteriori, nirenberg2011elliptic}, with later refinements by \cite{kufner1995interpolation, soudsky2018interpolation}. More recently, \cite{dong2024gagliardo} extended the inequality to this broader framework, including spaces with H\"older regularity.

Weights serve as an additional, independent dimension for describing rates of variation, complementing the perspectives of regularity and integrability. These three viewpoints are unified by the Caffarelli--Kohn--Nirenberg (CKN) inequality \cite{caffarelli1984first}.

\begin{ntheorem}[Caffarelli--Kohn--Nirenberg inequality]
    Let $n\ge1$, $p,r\in[1,\infty)$, and $\theta,\lambda\in[0,1]$. Assume
    \begin{equation}\label{IntegralCondition}
        \frac{1}{p}-\frac{a}{n}, \: \frac{1}{r}-\frac{c}{n}>0.
    \end{equation}
    Define $q\in[1,\infty)$ and $b\in\R$ by
    \begin{equation}\label{CKNParameterInterpolationRelation}
        \frac{1}{q}=\theta\l\frac{1}{p}-\frac{\lambda}{n}\r+\frac{1-\theta}{r}, 
        \qquad
        b=\theta(1+a-\lambda)+(1-\theta)c.
    \end{equation}
    Then there exists a constant $C=C(n,p,r,a,b,c,\theta)>0$ such that, for all $u\in C_c^\infty(\R^n)$,
    \begin{equation}\label{CaffarelliKohnNirenbergInequality}
        \big\||x|^{-b} u\big\|_{L^q}\le 
        C\,\big\||x|^{-a} D u\big\|_{L^p}^{\,\theta}
          \big\||x|^{-c} u\big\|_{L^r}^{\,1-\theta}.
    \end{equation}
    Eliminating $\lambda$ from \eqref{CKNParameterInterpolationRelation} yields the compatibility condition
    \begin{equation}\label{CKNParameterRelation}
        \frac{1}{q}-\frac{b}{n}
        =\theta\l\frac{1}{p}-\frac{1+a}{n}\r
         +(1-\theta)\l\frac{1}{r}-\frac{c}{n}\r.
    \end{equation}
    This is a "necessary condition" imposed by dimensional balance.
\end{ntheorem}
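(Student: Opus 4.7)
The plan is to follow the original strategy of Caffarelli, Kohn, and Nirenberg: establish the inequality at two endpoint configurations and then propagate to the full parameter range by H\"older interpolation and scaling. The geometric picture is that $\lambda \in [0,1]$ interpolates between a weighted Hardy inequality at $\lambda = 0$ (where $q = p$, $b = 1 + a$) and a weighted Sobolev embedding at $\lambda = 1$ (where $1/q = 1/p - 1/n$, $b = a$), while $\theta \in [0,1]$ then interpolates between the $\theta = 1$ case just described and the trivial lower-order bound at $\theta = 0$. Before proving sufficiency I would verify that \eqref{CKNParameterRelation} is necessary via the one-parameter dilations $u_\tau(x) = u(\tau x)$: invariance of the inequality under $\tau \to 0^+$ and $\tau \to \infty$ forces the exponents of $\tau$ on the two sides to agree, which yields exactly \eqref{CKNParameterRelation}.

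For the weighted Hardy endpoint, I would begin from the divergence identity $\nabla \cdot (|x|^{-\sigma} x) = (n - \sigma)\,|x|^{-\sigma}$, apply it with $\sigma = p(1+a)$, pair against $|u|^{p}$, integrate by parts, and invoke H\"older's inequality on the cross term $\int |x|^{1-\sigma} |u|^{p-1} |Du| \, dx$. This yields $\||x|^{-(1+a)} u\|_{L^p} \le C \||x|^{-a} D u\|_{L^p}$ with a constant of order $|n - p(1+a)|^{-1}$, finite precisely under the admissibility consistent with \eqref{IntegralCondition}.

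For the weighted Sobolev endpoint, I would use the Riesz-type representation $u(x) = c_n \int_{\R^n} \frac{(x-y)\cdot Du(y)}{|x-y|^n}\, dy$, reducing matters to the mapping properties of the Riesz potential $I_1$ on weighted Lebesgue spaces. The Stein--Weiss inequality then yields $\||x|^{-a} u\|_{L^{p^\ast}} \le C \||x|^{-a} Du\|_{L^p}$ in the admissible range of weights. An alternative is P\'olya--Szeg\H{o} symmetrization, which reduces to the radial case where the inequality becomes an explicit one-dimensional weighted Hardy inequality. With both $\lambda = 0$ and $\lambda = 1$ cases in hand, intermediate $\lambda \in (0,1)$ follows by H\"older applied to the factorization $|x|^{-bq}|u|^q = \bigl(|x|^{-(1+a)p}|u|^p\bigr)^{\alpha} \bigl(|x|^{-a p^\ast}|u|^{p^\ast}\bigr)^{1-\alpha}$ for the appropriate $\alpha(\lambda)$; the remaining $\theta$-interpolation is a second H\"older split of $|x|^{-bq}|u|^q$ into the $\theta = 1$ piece just obtained and the $\theta = 0$ piece $|x|^{-c}u$, governed by the first identity in \eqref{CKNParameterInterpolationRelation}.

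The principal obstacle is the simultaneous admissibility of the two H\"older splittings: one must verify that the auxiliary conjugate exponents lie in $[1,\infty]$ and that the weight powers produced along the interpolation remain in the range where Stein--Weiss and Hardy are valid. Parameter configurations near the Hardy-critical threshold $1+a = n/p$ and near $\lambda = 0$ require separate treatment, since the constant in the Hardy endpoint degenerates there; in those marginal regimes one avoids $\lambda = 0$ and interpolates instead from $\lambda$ slightly positive, where the weighted Sobolev endpoint controls everything. Once this bookkeeping is arranged, the constant $C$ depends on $n,p,r,a,b,c,\theta$ only through the intermediate H\"older and Stein--Weiss constants, as claimed.
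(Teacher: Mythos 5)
The paper does not prove this statement: it is reproduced as a known background theorem with a citation to Caffarelli--Kohn--Nirenberg \cite{caffarelli1984first}, so there is no in-paper proof to compare against. That said, your outline closely parallels the mechanism the paper \emph{does} use for its generalized version (Theorem~\ref{CaffarelliKohnNirenbergInequalityGeneralTheorem}): a Sobolev--Hardy edge estimate indexed by $\lambda$, followed by a $\theta$-mixing with the zero-order term $\||x|^{-c}u\|_{L^r}$ --- exactly your two-stage interpolation. Your necessity argument by dilation, the divergence-identity proof of the weighted Hardy endpoint, the Riesz-potential/Stein--Weiss proof of the weighted Sobolev endpoint, and the two multiplicative H\"older factorizations (which correctly bake the weights into the splitting, as one must, since the endpoint norms carry different weights and plain log-convexity of $L^q$ norms of a single function does not apply) are all the standard route and are sound in outline.

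Two points are genuinely incomplete. First, the endpoint estimates need more than \eqref{IntegralCondition}: the Hardy endpoint $\||x|^{-(1+a)}u\|_{L^p}\le C\||x|^{-a}Du\|_{L^p}$ for all $u\in C_c^\infty(\R^n)$ requires $\tfrac1p-\tfrac{1+a}{n}>0$ (otherwise $|x|^{-(1+a)p}$ is not locally integrable at the origin and the left side is infinite for generic $u$), and Stein--Weiss carries its own restrictions on the weight exponents; since $\tfrac1q-\tfrac bn=\tfrac1p-\tfrac{1+a}{n}$ along the entire $\theta=1$ segment by \eqref{CKNParameterRelation}, these are not removable technicalities but implicit hypotheses of the original theorem that the statement as reproduced here suppresses --- your proof must assume them explicitly. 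Second, your proposed repair of the critical case $p(1+a)=n$, namely ``interpolate instead from $\lambda$ slightly positive,'' is circular: the inequality at small positive $\lambda$ is an instance of what you are proving, and it cannot be obtained by interpolating between the $\lambda=1$ endpoint and a $\lambda=0$ endpoint that fails. In that marginal regime one needs either a direct argument (as in the original paper) or an explicit restriction of the admissible parameter set. With those two repairs the proposal is a correct skeleton of the classical proof.
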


Setting $\theta=1$, $\lambda=0$, and $a=0$ in \eqref{CKNParameterInterpolationRelation} gives $q=p$ and $b=1$. 
In this case, \eqref{CaffarelliKohnNirenbergInequality} reduces to the classical Hardy inequality:
\begin{equation}\label{HardyInequality}
    \big\||x|^{-1} u\big\|_{L^p(\R^n)} \le \frac{p}{n-p}\,\|D u\|_{L^p(\R^n)},
    \qquad u\in C_c^\infty(\R^n),\ \ 1<p<n,
\end{equation}
where the constant $\frac{p}{n-p}$ is optimal. 
For $p\ge n$, the weight $|x|^{-p}$ fails to be locally integrable at the origin; thus \eqref{HardyInequality} can hold only for functions vanishing near $0$, and in that case the constant necessarily depends on the distance of $\mathrm{supp}\,u$ to the origin (a local Hardy-type estimate). 
For comprehensive treatments of Hardy-type inequalities, see the monographs \cite{balinsky2015,opic1990hardy,persson2017weighted}.

Setting $\theta=1$ and $\lambda\in(0,1)$ in \eqref{CKNParameterInterpolationRelation} yields the non-interpolation form
\begin{equation}\label{CaffarelliKohnNirenbergInequalityNonInterpolated}
    \big\||x|^{-b} u\big\|_{L^q}\le C\,\big\||x|^{-a} D u\big\|_{L^p},
\end{equation}
where the exponents satisfy
\begin{equation*}
    \frac{1}{q}=\frac{1}{p}-\frac{\lambda}{n},
    \qquad
    b=1+a-\lambda,
\end{equation*}
and the integrability conditions
\begin{equation*}
    \frac{1}{p}-\frac{a}{n}>0,
    \qquad
    \frac{1}{q}-\frac{b}{n}>0
    \ \ \Big(\text{equivalently, } \frac{1}{p}-\frac{1+a}{n}>0\Big).
\end{equation*}
Here $u\in C_c^\infty(\R^n)$ and $C>0$ is independent of $u$. 
The non-interpolation CKN inequality \eqref{CaffarelliKohnNirenbergInequalityNonInterpolated} has been extensively studied; see, for example, \cite{badiale2002sobolev, catrina2001caffarelli, dolbeault2012extremal, ghoussoub2000multiple, wang2000singular}.

This paper develops an extension of the Caffarelli--Kohn--Nirenberg inequality within a unified weighted framework that accommodates both Sobolev and H\"older spaces. We describe the admissible parameter region for H\"older-type generalizations and show that it connects continuously to the classical Sobolev side, thereby producing an extended domain of validity for CKN-type estimates. Within this region, the inequality retains its interpolation structure and admits a formulation in terms of reciprocal parameters.

We establish the existence of a constant $C>0$ on the H\"older side—and likewise on the Lebesgue side close to the Sobolev threshold—the constant necessarily depends on the distance of $\mathrm{supp}\,u$ to the origin, reflecting the singularity of the weight at $x=0$. In contrast, on the classical Sobolev range one has a uniform constant independent of $u$. This distinction is consistent with the Morrey/H\"older control that emerges beyond the Sobolev threshold and clarifies how the two sides fit into a single weighted scale.

Taken together, these results provide a unified perspective on CKN-type inequalities across Sobolev and H\"older regimes, extend their range of applicability, and reveal a continuous interpolation mechanism between distinct regularity behaviors.

We adapt Nirenberg's framework \cite{nirenberg2011elliptic} and define a unified scale $X^p$ combining Lebesgue and H\"older spaces:
\begin{align*}
    0<p<\infty:& \quad X^p=L^p, \\
    p=\infty:& \quad X^p=L^\infty, \\
    -\infty<p<0:& \quad X^p=C^{p_1,p_2},
\end{align*}
where for $p<0$ we set
\begin{equation}\label{Notation}
    (p)_1=-\Big[\frac{n}{p}+1\Big], 
    \qquad
    (p)_2=-\frac{n}{p}-(p)_1 \in (0,1].
\end{equation}
The space $X^p(\Omega)$ is endowed with the natural norm, where $\Omega\subset\R^n$ is an open set (we also allow $\Omega=\R^n$):
\begin{align*}
    0<p<\infty:& & \|u\|_{X^p(\Omega)}&=\|u\|_{L^p(\Omega)}
    =\l\int_{\Omega}|u|^p\,\d x\r^{1/p}, \\
    p=\infty:& & \|u\|_{X^p(\Omega)}&=\|u\|_{L^\infty(\Omega)}
    =\esssup_{x\in\Omega}|u(x)|, \\
    -\infty<p<0:& & \|u\|_{X^p(\Omega)}&=\|u\|_{C^{(p)_1,(p)_2}(\Omega)} \\
    & & &=\max_{|\alpha|\le (p)_1}\ \sup_{x\in\Omega}|D^\alpha u(x)|
      \;+\!\!\sum_{|\alpha|=(p)_1}\ \sup_{\substack{x,y\in\Omega\\ x\ne y}}
      \frac{|D^\alpha u(x)-D^\alpha u(y)|}{|x-y|^{(p)_2}}.
\end{align*}

Building on this framework, we define higher–order spaces in direct analogy with Sobolev spaces. 
Let $k\in\mathbb{N}_0$ and $-\infty<\tfrac{1}{p}<+\infty$ (with $\tfrac{1}{\infty}=0$). 
All derivatives below are understood in the weak sense. We set
\begin{equation}\label{def:Xkp}
    X^{k,p}(\Omega)
    :=\left\{\,u:\ D^\alpha u\in X^p(\Omega)\ \text{for all multi–indices }\alpha \text{ with }|\alpha|\le k\,\right\}.
\end{equation}
In particular,
\begin{equation*}
    X^{0,p}(\Omega)=X^{p}(\Omega).
\end{equation*}

For a weight exponent $a\in\R$, we also define the weighted counterpart
\begin{equation}\label{def:Xkpa}
    X^{k,p,a}(\Omega)
    :=\left\{\,u:\ |x|^{-a}D^\alpha u\in X^p(\Omega)\ \text{for all }|\alpha|\le k\,\right\}.
\end{equation}

We now state the interpolation result on the $(k,\;1/p,\;a)$–scale. 
In the present form we treat the case $k=0$; higher–order extensions follow the same pattern.

\begin{theorem}[Interpolation Theorem]\label{InterpolationInequalityTheorem}
    Let $n\ge1$ and exponents satisfy
    \begin{equation*}
        \frac{1}{p},\frac{1}{r}\in\left(-\frac{1}{n},\,1\right],\qquad a,c\in\R.
    \end{equation*}
    Let $\Omega\subset\R^n\setminus\{0\}$ be a bounded open set. For any $u\in C_c^\infty(\Omega)$ with
    \begin{equation*}
        u\in X^{0,p,a}(\R^n)\cap X^{0,r,c}(\R^n),
    \end{equation*}
    and any $\lambda\in(0,1)$, define the interpolated parameters
    \begin{equation*}
        \frac{1}{q}=\frac{1-\lambda}{p}+\frac{\lambda}{r},
        \qquad
        b=(1-\lambda)a+\lambda c.
    \end{equation*}
    Then $u\in X^{0,q,b}(\R^n)$ and there exists a constant $C>0$ such that
    \begin{equation}\label{InterpolationInequality}
        \left\||x|^{-b} u\right\|_{X^q(\R^n)}\le C\left\||x|^{-a} u\right\|_{X^p(\R^n)}^{1-\lambda}
        \left\||x|^{-c} u\right\|_{X^r(\R^n)}^{\lambda}.
    \end{equation}
    Here $C$ depends only on the structural parameters $(n,p,r,q,a,b,c,\lambda)$ and on the domain $\Omega$, and is otherwise independent of $u$.
\end{theorem}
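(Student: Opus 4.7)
The plan is a two–step reduction. First, since $\mathrm{supp}\,u\subset\Omega\subset\R^n\setminus\{0\}$ is compact, $\delta:=\dist(\mathrm{supp}\,u,\{0\})>0$ and $R:=\sup_{\mathrm{supp}\,u}|x|<\infty$, and every multiplier $|x|^{-t}$ is smooth with two–sided bounds on a neighborhood of $\mathrm{supp}\,u$. I would first establish a localization lemma asserting that for functions supported in $\mathrm{supp}\,u$ the quantities $\||x|^{-t}v\|_{X^s(\R^n)}$ and $\|v\|_{X^s(\R^n)}$ are comparable, with constants depending on $\delta$, $R$, $s$, $t$, and hence on $\Omega$. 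On the Lebesgue side ($s>0$) this follows from uniform bounds on the weight; on the H\"older side ($s<0$) it reduces to the Leibniz estimate $\|wv\|_{C^\alpha}\lesssim\|w\|_{C^\alpha}\|v\|_{C^\alpha}$ applied with $w=|x|^{-t}\phi$ for a smooth bump $\phi$ equal to $1$ near $\mathrm{supp}\,u$ and vanishing near the origin.

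After the weights are absorbed, the inequality reduces to the unweighted statement $\|u\|_{X^q}\le C\|u\|_{X^p}^{1-\lambda}\|u\|_{X^r}^{\lambda}$, which I would prove by a case split on the signs of $1/p$, $1/r$, $1/q$. When $p,r\in[1,\infty]$ (whence $q\in[1,\infty]$), the estimate is H\"older's inequality with conjugate pair $p/((1-\lambda)q)$ and $r/(\lambda q)$, whose reciprocals sum to $1$ by the definition of $q$. When $p,r\in(-\infty,-n)$ (whence $q\in(-\infty,-n)$), the $L^\infty$ ingredient of the H\"older norm is trivial, and the semi–norm is controlled by the pointwise bound
\[
    |u(x)-u(y)|\le\bigl([u]_{C^{(p)_2}}|x-y|^{(p)_2}\bigr)^{1-\lambda}\bigl([u]_{C^{(r)_2}}|x-y|^{(r)_2}\bigr)^{\lambda},
\]
which yields $[u]_{C^{(q)_2}}\le[u]_{C^{(p)_2}}^{1-\lambda}[u]_{C^{(r)_2}}^{\lambda}$ through the affine relation $(q)_2=(1-\lambda)(p)_2+\lambda(r)_2$.

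The delicate case is the mixed regime, say $p>0$ and $r\in(-\infty,-n)$, with $\alpha=(r)_2$. The interpolated index can land in any of the three subranges, each handled separately and all pivoting on the endpoint $q=\infty$. For that endpoint I would invoke the standard concentration lemma: if $|u(x_0)|$ is within a factor of two of $\|u\|_{L^\infty}$, H\"older continuity keeps $|u|\ge\tfrac12|u(x_0)|$ on a ball of radius $\rho\asymp(|u(x_0)|/[u]_{C^\alpha})^{1/\alpha}$, and integrating $|u|^p$ on that ball gives $\|u\|_{L^\infty}\lesssim\|u\|_{L^p}^{1-\theta}[u]_{C^\alpha}^{\theta}$ with exactly the exponent $\theta=n/(p\alpha+n)$ forced by $0=(1-\theta)/p+\theta/r$. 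The Lebesgue subcase $q<\infty$ then follows from log–convexity $\|u\|_{L^q}\le\|u\|_{L^p}^{p/q}\|u\|_{L^\infty}^{1-p/q}$, and the H\"older subcase $q<-n$ from splitting $[u]_{C^{(q)_2}}$ at a critical scale $\rho^{*}$, bounding $|u(x)-u(y)|/|x-y|^{(q)_2}$ by $[u]_{C^\alpha}\rho^{\alpha-(q)_2}$ for small separations and by $2\|u\|_{L^\infty}\rho^{-(q)_2}$ for large ones, and optimizing in $\rho$.

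The main obstacle is making sure that the exponents produced by the concentration and scale–splitting arguments match, line by line, the algebraic interpolation relation $1/q=(1-\lambda)/p+\lambda/r$. This arithmetic consistency is the structural core of the proof in the mixed regime; the reintroduction of the weights via the localization lemma is routine bookkeeping, and the $\Omega$–dependence of the final constant enters only through that step.
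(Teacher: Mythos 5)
Your proposal is correct but takes a genuinely different route from the paper, and in one respect a cleaner one. The paper's proof is organized around two weighted endpoint lemmas: a concentration argument for the $q=\infty$ case (their Lemma~3.1), and a $K$-method/real-interpolation argument for the $q<0$ case (Lemma~3.2), which are then combined via reiteration in a three-case split (L--L, H--H, H--L). Your plan shares the essential core — the concentration/pigeonhole argument that pivots everything at $q=\infty$ by bounding $\|u\|_{L^\infty}$ from the sub-level mass of $u$ on a ball of radius $\rho\asymp(\|u\|_\infty/[u]_{C^\alpha})^{1/\alpha}$ — but replaces the $K$-method machinery entirely with elementary pointwise convexity (for the H--H case, taking geometric means of the two difference-quotient bounds) and a scale-splitting/optimization of the Hölder quotient at the scale $\rho^*$ (for the Hölder-target subcase of H--L). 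The other departure is your explicit localization lemma, which uses smooth cutoffs and the Leibniz estimate on $C^{0,\alpha}$ to conjugate the weight away at the outset, so that the rest of the argument is unweighted; the paper instead keeps the weights inside the endpoint lemmas. Your route is more self-contained and makes the $\Omega$-dependence of the constant completely transparent (it enters only through $\dist(\mathrm{supp}\,u,\{0\})$, $\operatorname{diam}\Omega$, and the choice of bump), whereas the paper's route is designed to plug into the general interpolation framework invoked elsewhere. Your direct convexity handling of the H--H case is in fact cleaner than the paper's, which routes through Lemma~3.2 with $r$ replaced by $\infty$ and an embedding $X^{0,r,c}\hookrightarrow X^{0,\infty,c}$, a step that does not obviously reproduce the claimed exponent $1/q=(1-\lambda)/p+\lambda/r$.

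One small point you should spell out to make the mixed Hölder-target subcase airtight: the scale-splitting controls the seminorm $[u]_{C^{(q)_2}}$, but the full $X^q$ norm also includes $\|u\|_{L^\infty}$. The concentration bound $\|u\|_{L^\infty}\lesssim\|u\|_{L^p}^{1-\theta}[u]_{C^\alpha}^{\theta}$ has exponent $\theta=n/(n+p\alpha)$, which is strictly less than $\lambda$ when $1/q<0$, so it does not directly give the $(1-\lambda,\lambda)$ split. This is repaired by one further geometric mean: write $\|u\|_{L^\infty}=\|u\|_{L^\infty}^{(1-\lambda)/(1-\theta)}\|u\|_{L^\infty}^{(\lambda-\theta)/(1-\theta)}$, bound the first factor by the concentration estimate and the second trivially by $\|u\|_{X^r}$; the exponents then recombine to $(1-\lambda,\lambda)$ by direct arithmetic. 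With that addition, the proposal is complete.
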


The interpolation theorem furnishes a unified analytic framework that treats the H\"older and Sobolev theories within a single weighted scale, while yielding quantitative control of the relevant norms. On this basis we extend the classical Hardy and Caffarelli--Kohn--Nirenberg inequalities to the spaces $X^{k,p,a}$. In particular, our CKN extension admits a strictly larger admissible parameter region than in the classical setting and interpolates continuously across the Sobolev–H\"older interface. We do not address optimal constants: the estimates are proved with a finite constant $C>0$, which—on the H\"older side and near the Sobolev threshold—necessarily depends on $\mathrm{dist}(\mathrm{supp}\,u,\{0\})$, reflecting the singularity of the weight at the origin.

\begin{theorem}[Generalized Hardy-Sobolev type inequality]\label{HardySobolevInequalityGeneralTheorem}
    Let $n\ge2$, and let $\Omega\subset\R^n\setminus\{0\}$ be a bounded open set. Assume
    \begin{equation*}
        \frac{1}{q}\in\left[\frac{1}{p^\ast},\frac{1}{p}\right]=\left[\frac{1}{p}-\frac{1}{n},\frac{1}{p}\right],
    \end{equation*}
    For any $u\in C_c^\infty(\Omega)$ with $u\in X^{1,p,a}(\R^n)$, and for $b$ be determined by
    \begin{equation*}
        \frac{1}{q}-\frac{b}{n}=\frac{1}{p}-\frac{1+a}{n}.
    \end{equation*}
    Then $u\in X^{0,q,b}(\R^n)$ and there exists a constant 
    \begin{equation*}
        C=C\big(n,p,q,a,b,\Omega\big)>0
    \end{equation*}
    such that
    \begin{equation}\label{HardySobolevInequalityGeneral}
        \left\||x|^{-b} u\right\|_{X^q(\R^n)}\le C\left\||x|^{-a}Du\right\|_{X^p(\R^n)}.
    \end{equation}
\end{theorem}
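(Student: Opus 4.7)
The plan is to reduce the full admissible range $\tfrac{1}{q}\in[\tfrac{1}{p}-\tfrac{1}{n},\tfrac{1}{p}]$ to two boundary cases and then invoke Theorem~\ref{InterpolationInequalityTheorem}. Set $\lambda:=n\bigl(\tfrac1p-\tfrac1q\bigr)\in[0,1]$. A direct check gives $\tfrac1q=(1-\lambda)\tfrac{1}{p^\ast}+\lambda\tfrac1p$ with $p^\ast$ the Sobolev conjugate, and the compatibility hypothesis $\tfrac1q-\tfrac{b}{n}=\tfrac1p-\tfrac{1+a}{n}$ then forces $b=(1-\lambda)a+\lambda(a+1)=a+\lambda$. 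Thus the two endpoints to be established are the weighted Sobolev/Morrey-type bound $(q,b)=(p^\ast,a)$ at $\lambda=0$ and the localized weighted Hardy-type bound $(q,b)=(p,a+1)$ at $\lambda=1$.

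First I would establish the Hardy endpoint
\[
\bigl\||x|^{-(a+1)}u\bigr\|_{X^p(\R^n)}\le C\bigl\||x|^{-a}Du\bigr\|_{X^p(\R^n)},\qquad u\in C_c^\infty(\Omega),
\]
by the classical integration by parts against the radial vector field $x/|x|^{n}$ in the Lebesgue sub-critical range, and by a localized variant in the critical/supercritical and H\"older ranges, where the local non-integrability of $|x|^{-(a+1)p}$ at the origin is offset by $\dist(\mathrm{supp}\,u,\{0\})>0$; this is the localized weighted Hardy lemma announced in the introduction. Second I would establish the Sobolev endpoint
\[
\bigl\||x|^{-a}u\bigr\|_{X^{p^\ast}(\R^n)}\le C\bigl\||x|^{-a}Du\bigr\|_{X^p(\R^n)}
\]
by applying the unweighted Sobolev-Morrey inequality on the $X^p$ scale from~\cite{dong2024gagliardo} to the substitution $v:=|x|^{-a}u\in C_c^\infty(\Omega)$ and absorbing the commutator $D(|x|^{-a})\,u=-a\,|x|^{-a-2}xu$ via the Hardy endpoint just proved, using boundedness of $|x|^{-a-1}$ on $\Omega$.

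Given both endpoints, Theorem~\ref{InterpolationInequalityTheorem} applied with $(p_1,a_1)=(p^\ast,a)$, $(p_2,a_2)=(p,a+1)$ and interpolation parameter $\lambda$ produces
\[
\bigl\||x|^{-b}u\bigr\|_{X^q(\R^n)}\le C\bigl\||x|^{-a}u\bigr\|_{X^{p^\ast}(\R^n)}^{1-\lambda}\bigl\||x|^{-(a+1)}u\bigr\|_{X^p(\R^n)}^{\lambda},
\]
into which the two endpoint estimates substitute to collapse the right-hand side to $C\bigl\||x|^{-a}Du\bigr\|_{X^p(\R^n)}^{(1-\lambda)+\lambda}=C\bigl\||x|^{-a}Du\bigr\|_{X^p(\R^n)}$, yielding the claim. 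The main obstacle I anticipate is the Hardy endpoint in the non-locally-integrable regime: there the constant necessarily depends on $\dist(\mathrm{supp}\,u,\{0\})$, and tracking this dependence uniformly as $\lambda$ sweeps across the Sobolev/H\"older interface $\tfrac1p=0$ requires care. A secondary technical point is that Theorem~\ref{InterpolationInequalityTheorem} demands $\tfrac{1}{p^\ast},\tfrac{1}{p}\in(-\tfrac1n,1]$, so at the extremal H\"older boundary one may need a short direct argument rather than a pure interpolation step.
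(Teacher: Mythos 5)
Your proposal follows essentially the same route as the paper: the same two endpoints $(p^\ast,a)$ and $(p,a+1)$, the same substitution $v=|x|^{-a}u$ with the commutator term absorbed by the localized weighted Hardy lemma, and the same application of Theorem~\ref{InterpolationInequalityTheorem} at level $\lambda=n(\tfrac1p-\tfrac1q)$. The paper's Hardy endpoint is obtained more crudely (Poincar\'e plus boundedness of the weight on $\Omega$, rather than integration by parts), but this is a cosmetic difference, and your closing caveats about the H\"older-range Hardy bound and the endpoint of the interpolation range are well taken.
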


\begin{remark}
    For $a=0$, the generalized Hardy--Sobolev inequality \eqref{HardySobolevInequalityGeneral} subsumes several standard endpoints:
    \begin{enumerate}
        \item \emph{Sobolev anchor ($q=p^\ast$).}
        Taking $q=p^\ast$ gives $b=0$, i.e., the unweighted Sobolev embedding on the $X^p$ scale.
        \item \emph{Hardy anchor ($q=p$).}
        Taking $q=p$ gives $b=1$, a Hardy-type estimate. On the side $p>n$ the constant depends on the domain $\Omega$.
        \item \emph{Morrey limit ($q=\infty$).}
        For $p>n$ and $q=\infty$ one recovers a weighted Morrey-type inequality with $b=1-\tfrac{n}{p}$.
    \end{enumerate}
\end{remark}

\begin{theorem}[Generalized Caffarelli-Kohn-Nirenberg inequality]\label{CaffarelliKohnNirenbergInequalityGeneralTheorem}
    Let $n\ge2$, and let $\Omega\subset\R^n\setminus\{0\}$ be a bounded open set. Assume
    \begin{equation*}
        \frac{1}{p}\in\l 0,\frac{1}{n}\r\cup\left(\frac{1}{n},1\right], \quad
        \frac{1}{r}\in\left(-\frac{1}{n},1\right], \quad
        a,c\in\R.
    \end{equation*}
    For any $u\in C_c^\infty(\Omega)$ with
    \begin{equation*}
        u\in X^{1,p,a}(\R^n)\cap X^{0,r,c}(\R^n),
    \end{equation*}
    and for any $\lambda,\theta\in[0,1]$, define
    \begin{equation*}
        \frac{1}{q}=\theta\l\frac{1}{p}-\frac{\lambda}{n}\r+\frac{1-\theta}{r},
        \qquad
        b=\theta(1+a-\lambda)+(1-\theta)c.
    \end{equation*}
    Then $u\in X^{0,q,b}(\R^n)$ and there exists a constant 
    \begin{equation*}
        C=C\big(n,p,q,r,a,b,c,\lambda,\theta,\Omega\big)>0
    \end{equation*}
    such that
    \begin{equation}\label{CaffarelliKohnNirenbergInequalityGeneral}
        \left\||x|^{-b} u\right\|_{X^q(\R^n)}
        \le C\left\||x|^{-a} Du\right\|_{X^p(\R^n)}^\theta
        \left\||x|^{-c} u\right\|_{X^r(\R^n)}^{1-\theta}.
    \end{equation}
    The parameters satisfy the compatibility condition
    \begin{equation*}
        \frac{1}{q}-\frac{b}{n}=\theta\l\frac{1}{p}-\frac{1+a}{n}\r
        +(1-\theta)\l\frac{1}{r}-\frac{c}{n}\r.
    \end{equation*}
\end{theorem}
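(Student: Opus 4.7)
The plan is to reduce the generalized CKN estimate to a two-step composition: first invoke the Generalized Hardy--Sobolev inequality (Theorem~\ref{HardySobolevInequalityGeneralTheorem}) to trade the gradient of $u$ for a lower-order weighted norm at an intermediate exponent, then apply the Interpolation Theorem (Theorem~\ref{InterpolationInequalityTheorem}) to bridge that intermediate norm with the given $X^{0,r,c}$ norm. This mirrors the two-step chain (Sobolev embedding followed by H\"older interpolation in $L^q$) that produces the classical CKN inequality in the Lebesgue setting, now carried over to the unified scale $X^{k,p,a}$.

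To make the bookkeeping transparent, I would introduce the auxiliary parameters
\[\frac{1}{q_1}=\frac{1}{p}-\frac{\lambda}{n},\qquad b_1=1+a-\lambda,\]
so that $1/q_1\in[1/p-1/n,\,1/p]$ for $\lambda\in[0,1]$, and the hypothesis $1/p\ne 1/n$ keeps $(p,q_1)$ away from the critical Sobolev exponent. Theorem~\ref{HardySobolevInequalityGeneralTheorem} then delivers
\[\bigl\||x|^{-b_1}u\bigr\|_{X^{q_1}(\R^n)}\le C_1\,\bigl\||x|^{-a}Du\bigr\|_{X^p(\R^n)}.\]
Setting $\mu:=1-\theta\in[0,1]$, a direct substitution shows
\[\frac{1}{q}=(1-\mu)\frac{1}{q_1}+\mu\,\frac{1}{r},\qquad b=(1-\mu)\,b_1+\mu\, c,\]
so once one checks that $1/q_1,\,1/r\in(-1/n,1]$, Theorem~\ref{InterpolationInequalityTheorem} applied to $u$ with the pairs $(q_1,b_1)$ and $(r,c)$ yields
\[\bigl\||x|^{-b}u\bigr\|_{X^q(\R^n)}\le C_2\,\bigl\||x|^{-b_1}u\bigr\|_{X^{q_1}(\R^n)}^{\theta}\bigl\||x|^{-c}u\bigr\|_{X^r(\R^n)}^{1-\theta}.\]
Raising the Hardy--Sobolev bound to the $\theta$-th power and substituting gives \eqref{CaffarelliKohnNirenbergInequalityGeneral} with $C=C_1^{\theta}C_2$; the compatibility relation stated in the theorem falls out by taking the $(\theta,1-\theta)$-convex combination of $1/q_1-b_1/n=1/p-(1+a)/n$ with $1/r-c/n$.

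The main care lies in verifying admissibility at the intermediate step. Under the hypothesis $1/p\in(0,1/n)\cup(1/n,1]$ and $\lambda\in[0,1]$, one has to check that $1/q_1=1/p-\lambda/n$ lies simultaneously in the interval $[1/p-1/n,\,1/p]$ required by Theorem~\ref{HardySobolevInequalityGeneralTheorem} and in the half-open range $(-1/n,1]$ required by Theorem~\ref{InterpolationInequalityTheorem}; a short case split on the two subintervals of $1/p$ handles this, and the analogous check for $1/r$ is already part of the hypothesis. The boundary values $\theta=0$ (trivial, since then $q=r$ and $b=c$) and $\theta=1$ (exactly Theorem~\ref{HardySobolevInequalityGeneralTheorem}) are absorbed without extra work. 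The final constant inherits its $\Omega$-dependence from $C_1$ and $C_2$; this dependence is unavoidable on the H\"older side and near the Sobolev threshold, and is exactly the geometric dependence already built into the two building-block theorems, reflecting the singularity of $|x|^{-b}$ at the origin.
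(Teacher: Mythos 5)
Your proof follows exactly the same two-step architecture as the paper's own argument: first apply the Generalized Hardy--Sobolev inequality (Theorem~\ref{HardySobolevInequalityGeneralTheorem}) along the Sobolev--Hardy edge to trade the gradient for an intermediate weighted norm, then apply the Interpolation Theorem (Theorem~\ref{InterpolationInequalityTheorem}) at level $\theta$ to mix that intermediate norm with the $X^{0,r,c}$ control. The decomposition, the admissibility checks (that $\tfrac{1}{q_1}$ lies in $[\tfrac{1}{p^\ast},\tfrac{1}{p}]\cap(-\tfrac1n,1]$ and that $\tfrac1p\ne\tfrac1n$ keeps the Sobolev step away from the critical index), the handling of the boundary values $\theta\in\{0,1\}$, and the derivation of the compatibility relation all line up with the paper.

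Where you differ is in the parametrization of the intermediate pair, and here your version is actually the consistent one. You take
\begin{equation*}
    \frac{1}{q_1}=\frac{1}{p}-\frac{\lambda}{n},\qquad b_1=1+a-\lambda,
\end{equation*}
which satisfies $\tfrac{1}{q_1}-\tfrac{b_1}{n}=\tfrac{1}{p}-\tfrac{1+a}{n}$ as required by Theorem~\ref{HardySobolevInequalityGeneralTheorem}, and which, after the $\theta$-mixing $\tfrac1q=\theta\tfrac{1}{q_1}+\tfrac{1-\theta}{r}$, $b=\theta b_1+(1-\theta)c$, reproduces precisely the parameter relations in the statement. The paper instead introduces $\tfrac{1}{p_\lambda}=\tfrac{1-\lambda}{p^\ast}+\tfrac{\lambda}{p}=\tfrac1p-\tfrac{1-\lambda}{n}$ together with $a_\lambda=a+1-\lambda$, and these two choices are \emph{not} mutually consistent: one would need $a_\lambda=a+\lambda$ to pair with that $p_\lambda$. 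The paper's claimed verification of the scaling rule, $\tfrac{1}{p_\lambda}-\tfrac{a_\lambda}{n}=\tfrac1p-\tfrac{1+a}{n}$, is in fact an arithmetic slip; as written the left side equals $\tfrac1p-\tfrac{2-2\lambda+a}{n}$, which agrees with the right only when $\lambda=\tfrac12$. Similarly the paper's $\tfrac1q=\tfrac{\theta}{p_\lambda}+\tfrac{1-\theta}{r}$ produces $\theta\bigl(\tfrac1p-\tfrac{1-\lambda}{n}\bigr)+\tfrac{1-\theta}{r}$ rather than the stated $\theta\bigl(\tfrac1p-\tfrac{\lambda}{n}\bigr)+\tfrac{1-\theta}{r}$. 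This is a $\lambda\leftrightarrow 1-\lambda$ mismatch in the paper's exposition (the definition of $p_\lambda$ should read $\tfrac{\lambda}{p^\ast}+\tfrac{1-\lambda}{p}$). Your proof silently corrects it; the underlying argument and the final conclusion are the same.
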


\begin{remark}[Consequences and specializations]
    The generalized Caffarelli-Kohn-Nirenberg inequality (\ref{CaffarelliKohnNirenbergInequalityGeneral}) establishes a unified framework that encompasses several important special cases:
    \begin{enumerate}
        \item \emph{Classical CKN (Lebesgue side).}
        For $p,r\ge 1$ with $\frac{1}{p}-\frac{a}{n}>0$ and $\frac{1}{r}-\frac{c}{n}>0$, \eqref{CaffarelliKohnNirenbergInequalityGeneral} reduces to the standard CKN inequality (allowing $p=n$). In this regime the constant is independent of $\Omega$.
        \item \emph{Beyond Lebesgue integrability (singular side).}
        If at least one of the conditions $\tfrac{1}{p}-\tfrac{a}{n}>0$ or $\tfrac{1}{r}-\tfrac{c}{n}>0$ fails, the weight is not locally integrable at $x=0$; we therefore work with $\Omega\subset\R^n\setminus\{0\}$, and the constant necessarily depends on $\Omega$. More broadly, our framework allows parameters to enter the H\"older regime: whenever a parameter in the unified $X^{\cdot}$–scale becomes negative, the corresponding quantity transitions continuously from an $L^{\cdot}$ norm to a H\"older (semi)norm. This yields new yet consistent CKN-type estimates across the full Lebesgue–H\"older spectrum; in the nonintegrable regime the $\Omega$–dependence of the constant is unavoidable.
    \end{enumerate}
\end{remark}

The paper is organized as follows. Section~2 collects preliminaries and notation. In Section~3 we prove the interpolation inequality \eqref{InterpolationInequality} by a two–tier scheme. Section~4 contains the proof of the generalized Caffarelli--Kohn--Nirenberg inequality (Theorem~\ref{CaffarelliKohnNirenbergInequalityGeneralTheorem}), its endpoint \(p=n\) logarithmic variant. The main novelty is an interpolation principle that depends continuously on regularity, integrability, and weight, thereby extending CKN-type estimates across the entire H\"older--Lebesgue spectrum with a precise description of the domain dependence.


\section{Preliminaries}


\subsection{A localized weighted Hardy inequality}

We record a localized Hardy-type estimate adapted to our weighted setting. It is the only place where the geometry of $\Omega$ explicitly enters through coarse parameters such as $\dist(\Omega,\{0\})$ and the Poincar\'e constant. The proof is elementary, relying on the boundedness of the weight on $\Omega$ and the standard Poincar\'e inequality for $W^{1,p}_0(\Omega)$.

\begin{lemma}[Localized weighted Hardy]\label{lem:localized-hardy}
Let $\Omega\subset\R^n\setminus\{0\}$ be a bounded open set and let $1\le p<\infty$, $a\in\R$. 
Then there exists a constant $C=C(n,p,a,\Omega)>0$ such that, for all $u\in C_c^\infty(\Omega)$,
\begin{equation}\label{eq:localized-hardy}
    \big\||x|^{-(a+1)}u\big\|_{L^p(\Omega)}
    \ \le\ 
    C\,\big\||x|^{-a}\nabla u\big\|_{L^p(\Omega)}.
\end{equation}
Moreover, one can take
\begin{equation}\label{eq:localized-hardy-constant}
    C\ \lesssim\ \frac{M(a,\Omega)}{m(a,\Omega)}\,
    \frac{C_P(\Omega)}{\dist(\Omega,\{0\})},
\end{equation}
where $C_P(\Omega)$ is a Poincar\'e constant for $W^{1,p}_0(\Omega)$, and
\begin{equation*}
    m(a,\Omega):=\inf_{x\in\Omega}|x|^{-a}>0,
    \qquad
    M(a,\Omega):=\sup_{x\in\Omega}|x|^{-a}<\infty .
\end{equation*}
\end{lemma}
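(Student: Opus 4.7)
The plan is to exploit the fact that $\Omega$ is bounded and stays away from the origin, so the weight $|x|^{-a}$ is bounded from above by $M(a,\Omega)$ and from below by $m(a,\Omega)$ on $\Omega$. Consequently the weighted Hardy inequality collapses onto an unweighted Poincar\'e-type estimate, losing only the dimensionless factor $M(a,\Omega)/m(a,\Omega)$ together with the geometric factor $1/\dist(\Omega,\{0\})$ that accounts for the extra power of $|x|^{-1}$ on the left-hand side.

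First I would set $d:=\dist(\Omega,\{0\})>0$ and use the elementary pointwise bound $|x|^{-(a+1)}\le d^{-1}|x|^{-a}$, valid on $\Omega$ since $|x|\ge d$ there. Raising to the $p$-th power, multiplying by $|u|^p$, and integrating over $\Omega$ produces the first reduction
\begin{equation*}
    \big\||x|^{-(a+1)}u\big\|_{L^p(\Omega)}\ \le\ \frac{1}{d}\,\big\||x|^{-a}u\big\|_{L^p(\Omega)}.
\end{equation*}

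Next I would invoke the standard Poincar\'e inequality $\|u\|_{L^p(\Omega)}\le C_P(\Omega)\|\nabla u\|_{L^p(\Omega)}$, which applies because $u\in C_c^\infty(\Omega)\subset W^{1,p}_0(\Omega)$, and sandwich it between the pointwise bounds $m(a,\Omega)\le |x|^{-a}\le M(a,\Omega)$ to obtain
\begin{align*}
    \big\||x|^{-a}u\big\|_{L^p(\Omega)}
    &\le M(a,\Omega)\,\|u\|_{L^p(\Omega)} \\
    &\le M(a,\Omega)\,C_P(\Omega)\,\|\nabla u\|_{L^p(\Omega)} \\
    &\le \frac{M(a,\Omega)\,C_P(\Omega)}{m(a,\Omega)}\,\big\||x|^{-a}\nabla u\big\|_{L^p(\Omega)}.
\end{align*}

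Combining the two displays proves \eqref{eq:localized-hardy} with exactly the constant announced in \eqref{eq:localized-hardy-constant}. The only small verification needed is $0<m(a,\Omega)\le M(a,\Omega)<\infty$, which follows at once from the containment $\Omega\subset\{d\le|x|\le R\}$ for some $R<\infty$ by a one-line case split on the sign of $a$. I do not anticipate any genuine analytic obstacle here: the statement is essentially a bookkeeping lemma whose purpose is to isolate the $\Omega$-dependence of the constant in the later weighted CKN estimates through three coarse ingredients, namely the distance to the origin, the Poincar\'e constant, and the weight oscillation $M/m$; the only care required is to keep these three quantities visible at each step so that the quantitative bound \eqref{eq:localized-hardy-constant} can be read off directly.
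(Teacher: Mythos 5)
Your proposal is correct and follows essentially the same argument as the paper: pointwise absorption of the extra power $|x|^{-1}$ via $\dist(\Omega,\{0\})$, the Poincar\'e inequality on $W^{1,p}_0(\Omega)$, and the two-sided weight bounds $m(a,\Omega)\le|x|^{-a}\le M(a,\Omega)$, chained in the same order to produce the identical constant $\frac{M(a,\Omega)}{m(a,\Omega)}\cdot\frac{C_P(\Omega)}{\dist(\Omega,\{0\})}$. No gap to report.
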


\begin{proof}
    Since $\Omega$ is bounded and $\dist(\Omega,\{0\})=: \rho>0$, we have $0<\rho\le |x|\le R<\infty$ on $\Omega$ for some $R=R(\Omega)$, hence $m(a,\Omega),M(a,\Omega)$ are finite and positive.
    For $u\in C_c^\infty(\Omega)\subset W^{1,p}_0(\Omega)$ the Poincar\'e inequality gives
    \begin{equation*}
        \|u\|_{L^p(\Omega)}\ \le\ C_P(\Omega)\,\|\nabla u\|_{L^p(\Omega)}.
    \end{equation*}
    Using the elementary bounds $|x|^{-(a+1)}\le \rho^{-1}|x|^{-a}$ on $\Omega$ and
    \begin{equation*}
        \|\,|x|^{-a} u\|_{L^p(\Omega)}\ \le\ M(a,\Omega)\,\|u\|_{L^p(\Omega)},
        \qquad
        \|\nabla u\|_{L^p(\Omega)}\ \le\ \frac{1}{m(a,\Omega)}\,\|\,|x|^{-a}\nabla u\|_{L^p(\Omega)},
    \end{equation*}
    we obtain
    \begin{align*}
        \big\||x|^{-(a+1)}u\big\|_{L^p(\Omega)}
        \ &\le\ \rho^{-1}\,\big\||x|^{-a} u\big\|_{L^p(\Omega)} \\
        \ &\le\ \rho^{-1} M(a,\Omega)\, \|u\|_{L^p(\Omega)} \\
        \ &\le\ \frac{M(a,\Omega)}{m(a,\Omega)}\,\frac{C_P(\Omega)}{\rho}\,
        \big\||x|^{-a}\nabla u\big\|_{L^p(\Omega)}.
    \end{align*}
    This is \eqref{eq:localized-hardy} with \eqref{eq:localized-hardy-constant}.
\end{proof}

For general weighted Sobolev/Hardy theory, see Maz’ya~\cite{maz2013sobolev}.

\begin{corollary}[Weighted Hardy in the $X^p$ notation]\label{lem:localized-hardy-Xp}
    Under the assumptions of Lemma~\ref{lem:localized-hardy}, for $1\le p<\infty$ and $u\in C_c^\infty(\Omega)$,
    \begin{equation}\label{eq:localized-hardy-Xp}
        \big\||x|^{-(a+1)}u\big\|_{X^{p}(\Omega)}
        \ \le\ 
        C(n,p,a,\Omega)\,\big\||x|^{-a}\nabla u\big\|_{X^{p}(\Omega)}.
    \end{equation}
\end{corollary}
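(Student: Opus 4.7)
The plan is an immediate unpacking of definitions followed by a direct appeal to Lemma~\ref{lem:localized-hardy}. For the range $1\le p<\infty$ appearing in the statement, the unified space $X^p(\Omega)$ was defined earlier in the paper to coincide with $L^p(\Omega)$, with equal norms. So my first step is to rewrite both $\||x|^{-(a+1)}u\|_{X^p(\Omega)}$ and $\||x|^{-a}\nabla u\|_{X^p(\Omega)}$ as Lebesgue norms.

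Once this identification is in place, the inequality to be proved reduces verbatim to the conclusion~\eqref{eq:localized-hardy} of Lemma~\ref{lem:localized-hardy}, under identical hypotheses on $\Omega$, $p$, $a$, and $u$. I would therefore invoke that lemma directly and take $C(n,p,a,\Omega)$ to be the constant it furnishes; the quantitative bound~\eqref{eq:localized-hardy-constant}, expressed through the ratio $M(a,\Omega)/m(a,\Omega)$, the Poincar\'e constant $C_P(\Omega)$, and $\dist(\Omega,\{0\})^{-1}$, is inherited without modification.

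There is essentially no obstacle to overcome. The role of this corollary is purely notational bookkeeping: it repackages the localized weighted Hardy estimate into the unified $X^p$ language used throughout the subsequent interpolation arguments, so that later applications need not re-identify $X^p$ with $L^p$ each time. The restriction to $p<\infty$ is what makes the argument this short; the definition of $X^p$ branches at $p=\infty$, and more substantively the Poincar\'e inequality underlying Lemma~\ref{lem:localized-hardy} fails there, so extending the statement to the endpoint $p=\infty$ or to the H\"older range $p<0$ would require a genuinely different argument and is not attempted at this point in the paper.
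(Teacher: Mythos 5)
Your proof is correct and coincides with the paper's: for $1\le p<\infty$ the space $X^p(\Omega)$ is $L^p(\Omega)$ by definition, so the corollary is a verbatim restatement of Lemma~\ref{lem:localized-hardy}, constant included. Nothing further is needed.
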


\begin{proof}
For $1\le p<\infty$ we have $X^{p}=L^p$, hence \eqref{eq:localized-hardy-Xp} is just \eqref{eq:localized-hardy}.
\end{proof}


\subsection{Generalized Sobolev inequality}

Within the unified scale $X^p$, we record a general form that covers both the classical Sobolev and Morrey inequalities; a proof can be found in \cite{dong2024gagliardo}.

\begin{ntheorem}[Generalized Sobolev inequality on a domain]\label{thm:GenSobolev:Omega}
    Let $n\ge 2$ and assume
    \begin{equation*}
        \frac{1}{p}\in\l-\infty,\frac{1}{n}\r\ \cup\ \left(\frac{1}{n},1\right],
        \qquad \l\frac{1}{\infty}=0\r.
    \end{equation*}
    Let $\Omega\subset\R^n$ be a bounded open set, and let $u\in C_c^\infty(\Omega)\cap X^{1,p}(\R^n)$. Then $u\in X^{0,p^\ast}(\Omega)$, where
    \begin{equation*}
        \frac{1}{p^\ast}=\frac{1}{p}-\frac{1}{n}.
    \end{equation*}
    Moreover, there exists a constant $C=C(n,p,\Omega)>0$ independent of $u$ such that
    \begin{equation}\label{SobolevInequalityGeneral}
        \|u\|_{X^{p^\ast}(\Omega)}\le C\,\|D u\|_{X^{p}(\Omega)}.
    \end{equation}
\end{ntheorem}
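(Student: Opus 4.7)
The plan is to split along the sign of $\tfrac{1}{p}-\tfrac{1}{n}$ and handle each regime by the appropriate classical estimate or its H\"older analogue. Set $\sigma:=\tfrac{1}{p^\ast}=\tfrac{1}{p}-\tfrac{1}{n}$; the three relevant regimes are the subcritical Sobolev range $\sigma>0$ (i.e.\ $1\le p<n$), the supercritical Lebesgue range $\sigma\in(-\tfrac{1}{n},0)$ (i.e.\ $n<p\le\infty$), and the H\"older range $\sigma<-\tfrac{1}{n}$ (i.e.\ $p<0$ in the Nirenberg convention, so that $X^p$ is already a H\"older space).

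For $\sigma>0$ I would extend $u$ by zero from $\Omega$ to $\R^n$, which is admissible since $u\in C_c^\infty(\Omega)$, and invoke the classical Sobolev inequality $\|u\|_{L^{p^\ast}}\le C(n,p)\|Du\|_{L^p}$; compact support lets both norms be read on $\Omega$, and the constant is $\Omega$-independent in this regime. For the Lebesgue supercritical range $n<p<\infty$, Morrey's inequality gives $u\in C^{0,1-n/p}$, which by Nirenberg's convention is exactly $X^{p^\ast}$ since $(p^\ast)_1=0$ and $(p^\ast)_2=1-n/p$; the endpoint $p=\infty$ is the Lipschitz case, for which $u\in W^{1,\infty}$ with compact support in a bounded $\Omega$ gives the desired $C^{0,1}$ bound (using $\mathrm{diam}(\Omega)$ to pass from $\|Du\|_\infty$ to $\|u\|_\infty$).

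The substantive step is the H\"older-to-H\"older regime $p<0$. Writing $-\tfrac{n}{p}=(p)_1+(p)_2$ with $(p)_2\in(0,1]$, a direct computation yields $(p^\ast)_1=(p)_1+1$ and $(p^\ast)_2=(p)_2$. Hence the top-order H\"older seminorm in $\|u\|_{X^{p^\ast}}$ is literally the top-order H\"older seminorm of $Du$ in $X^p$, since a derivative of order $(p)_1+1$ of $u$ coincides with a derivative of order $(p)_1$ of $Du$. It remains only to dominate the sup-norms of $D^\alpha u$ for $|\alpha|\le(p^\ast)_1$ by $\|Du\|_{X^p}$ alone, with no $\|u\|_{X^p}$ on the right-hand side. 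For this I would use the compact support of $u$ in the bounded set $\Omega$: a one-dimensional fundamental-theorem-of-calculus bound $\sup|D^\alpha u|\le \mathrm{diam}(\Omega)\,\sup|D^{\alpha+e_j}u|$ (integrating from a point outside $\mathrm{supp}\,u$) iterates to reduce every lower-order sup to the top-order sup, which is finally dominated by the corresponding H\"older seminorm of $Du$ after one more diameter factor.

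The main obstacle I expect is the arithmetic bookkeeping at the transitions and in the floor-function calculation for $p<0$, especially the edge case $(p)_2=1$ where the shift in indices looks different at first sight; one must confirm that $(p^\ast)_1=(p)_1+1$ and $(p^\ast)_2\in(0,1]$ uniformly across subcases. A secondary difficulty is tracking which constants genuinely require $\Omega$-dependence: the subcritical Sobolev range yields an $\Omega$-independent constant, whereas every step that invokes compact support to trade a seminorm for a sup-norm introduces a $\mathrm{diam}(\Omega)$-type factor, so the final constant in the H\"older regime inherently depends on $\Omega$ through coarse geometric data.
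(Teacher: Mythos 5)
The paper does not prove this theorem itself; it cites the author's companion paper \cite{dong2024gagliardo} for a proof, so there is no in-text argument to compare against. Your regime-by-regime decomposition (classical Sobolev for $1\le p<n$, Morrey for $n<p<\infty$, Lipschitz for $p=\infty$, and a direct shift in the H\"older indices for $p<0$) is the natural strategy for a statement of this type and it is correct. In particular your index computation $(p^\ast)_1=(p)_1+1$, $(p^\ast)_2=(p)_2$ for $p<0$ is right, including the edge case $(p)_2=1$: with $s=-n/p$ one always has $(p)_1=\lceil s\rceil-1$, so incrementing $s$ by $1$ simply shifts the integer part by one and leaves the fractional part unchanged.

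One remark on the H\"older regime that slightly simplifies your bookkeeping. After the index shift, every quantity appearing in $\|u\|_{X^{p^\ast}}$ is, with the single exception of $\sup|u|$, already a term of $\|Du\|_{X^{p}}$: for $1\le|\alpha|\le(p^\ast)_1=(p)_1+1$ one has $D^\alpha u=D^\beta(\partial_j u)$ with $|\beta|\le(p)_1$, so $\sup|D^\alpha u|$ is one of the sup-norm terms in $\|\partial_j u\|_{X^{p}}$, and for $|\alpha|=(p^\ast)_1$ the seminorm $[D^\alpha u]_{C^{0,(p^\ast)_2}}=[D^\beta(\partial_j u)]_{C^{0,(p)_2}}$ is one of the seminorm terms. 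Hence the only place where the fundamental-theorem-of-calculus step and the $\mathrm{diam}(\Omega)$ factor are needed is in bounding $\sup|u|\le\mathrm{diam}(\Omega)\,\|Du\|_{L^\infty}$; no iteration is necessary, and one need not pass from the top-order sup to the H\"older seminorm of $Du$ (that sup is already part of the $X^p$ norm). This also makes transparent why the $\Omega$-dependence enters solely through the zeroth-order sup term, consistent with the remark in the paper that the scale-invariant seminorm estimate on the Morrey side has an $\Omega$-independent constant while the full norm does not. The same single-FTC observation tidies up your $n<p\le\infty$ case: Morrey (or the Lipschitz bound) gives the seminorm with a constant independent of $\Omega$, and $\|u\|_{L^\infty(\Omega)}\le\mathrm{diam}(\Omega)^{1-n/p}[u]_{C^{0,1-n/p}}$ by evaluating the H\"older quotient at a point where $u$ vanishes.
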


\begin{remark}
    We next record a few remarks clarifying the $\Omega$–dependence of the constant in \eqref{SobolevInequalityGeneral} and the reductions to the Sobolev and Morrey cases.
    \begin{enumerate}
        \item \emph{Seminorm form (scale invariant).} In the Morrey/H\"older side $p>n$ one has the seminorm estimate
        \begin{equation*}
            [u]_{C^{0,\,1-\frac{n}{p}}(\Omega)}\ \le\ C(n,p)\,\|D u\|_{L^{p}(\Omega)},
        \end{equation*}
        where $C(n,p)$ is independent of $\Omega$; the full $C^{0,\,1-\frac{n}{p}}$ \emph{norm} then picks up an additional term 
        $\|u\|_{L^\infty(\Omega)}$, whose control introduces the $\Omega$-dependence in \eqref{SobolevInequalityGeneral}.
        \item \emph{Dependence on \(\Omega\).} The constant in \eqref{SobolevInequalityGeneral} may be taken to depend on coarse geometric data of $\Omega$ (e.g., $\mathrm{diam}(\Omega)$, or on annuli the ratio of radii). In the unweighted case considered here, no exclusion of the origin is needed.
        \item \emph{Reductions.} If $\tfrac{1}{p}>\tfrac{1}{n}$ (i.e., $p<n$), then $X^{p^\ast}=L^{p^\ast}$ and \eqref{SobolevInequalityGeneral} is the classical Sobolev embedding on $\Omega$. If $\tfrac{1}{p}<\tfrac{1}{n}$ (i.e., $p>n$), then $X^{p^\ast}=C^{0,\,1-\frac{n}{p}}$ and \eqref{SobolevInequalityGeneral} yields Morrey’s inequality on $\Omega$.
        \item \emph{On the endpoint $p=n$.} In our approach, the case $p=n$ is excluded because the generalized Sobolev embedding used in the proofs would require the limiting (and false) inclusion $W^{1,n}(\Omega)\hookrightarrow L^\infty(\Omega)$, i.e., $X^{p^\ast}=X^\infty$ with $p^\ast=\infty$. 
        At $p=n$ one only has the well-known endpoint substitutes $W^{1,n}(\Omega)\hookrightarrow \mathrm{BMO}(\Omega)$ (see John, Nirenberg~\cite{john1961functions}) or Orlicz-type (see Trudinger~\cite{trudinger1967imbeddings}, Moser~\cite{moser1971sharp}) embeddings, and Brezis--Wainger (see \cite{brezis1980note}) logarithmic refinements on bounded domains.
    \end{enumerate}
\end{remark}


\subsection{The Trudinger--Moser inequality}

The endpoint Sobolev embedding in dimension \(n\) is of exponential type and is usually stated as the Trudinger--Moser inequality; see, e.g., \cite{moser1971sharp,trudinger1967imbeddings}.

\begin{ntheorem}[Trudinger--Moser]\label{thm:TM}
    Let \(\Omega\subset\R^n\) be a bounded domain, \(n\ge2\). There exist constants \(\alpha_\ast=\alpha_\ast(n,\Omega)>0\) and \(C_\ast=C_\ast(n,\Omega)>0\) such that for all \(v\in W^{1,n}_0(\Omega)\),
    \begin{equation}\label{eq:TM}
    \int_\Omega \exp\!\Big(\alpha_\ast\, \frac{|v(x)|^{n'}}{\|\nabla v\|_{L^n(\Omega)}^{\,n'}}\Big)\,dx \;\le\; C_\ast,
    \qquad n'=\frac{n}{n-1}.
    \end{equation}
\end{ntheorem}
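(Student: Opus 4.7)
The plan is to reduce the exponential integral to a controlled power-series estimate. By homogeneity, normalize $\|\nabla v\|_{L^n(\Omega)}=1$, expand the exponential, and integrate termwise:
\begin{equation*}
\int_\Omega \exp\bigl(\alpha_*\,|v|^{n'}\bigr)\,dx
\;=\; |\Omega| \;+\; \sum_{k=1}^\infty \frac{\alpha_*^{\,k}}{k!}\,\|v\|_{L^{kn'}(\Omega)}^{\,kn'}.
\end{equation*}
To make this series finite it suffices to control $\|v\|_{L^{kn'}(\Omega)}^{kn'}$ with growth at most like $k!\,R^{-k}$ for some $R=R(n,\Omega)>0$. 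Everything then reduces to a sufficiently sharp subcritical Sobolev embedding at $p=n$.

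The quantitative input I need is the sharp-in-$q$ form
\begin{equation*}
\|v\|_{L^q(\Omega)} \;\le\; C_0(n,\Omega)\,q^{1/n'}\,\|\nabla v\|_{L^n(\Omega)}
\qquad (q\ge n),
\end{equation*}
with $C_0$ \emph{independent of $q$}. This is the main obstacle: the generalized Sobolev embedding recorded in the preceding subsection is not applicable at $p=n$, and applying it at some subcritical $p<n$ followed by $L^q$ embedding on bounded $\Omega$ gives a bound with non-optimal $q$-dependence, too weak for the series above to close. The classical route is Moser's: Schwarz-symmetrize $v$, change variables $r = (|\Omega|/\omega_n)^{1/n} e^{-t/n}$ to reduce to a one-dimensional estimate on $[0,\infty)$, and extract the sharp $q^{1/n'}$ growth from a weighted Hardy-type lemma on the real line. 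An alternative is to apply Gagliardo--Nirenberg to $|v|^s$ and optimize $s=s(q)$, which yields the same scaling (with a worse constant).

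With this embedding in hand, the normalization $\|\nabla v\|_{L^n(\Omega)}=1$ and the identity $(n-1)n'/n = 1$ give
\begin{equation*}
\|v\|_{L^{kn'}(\Omega)}^{\,kn'} \;\le\; C_0^{\,kn'}\,(kn')^{k}.
\end{equation*}
Stirling's bound $k!\ge (k/e)^k\sqrt{2\pi k}$ then majorizes each summand by
\begin{equation*}
\frac{\alpha_*^{\,k}}{k!}\,\|v\|_{L^{kn'}(\Omega)}^{\,kn'}
\;\le\; \frac{\bigl(\alpha_*\,e\,n'\,C_0^{\,n'}\bigr)^{k}}{\sqrt{2\pi k}},
\end{equation*}
so any choice $\alpha_* < (e\,n'\,C_0^{\,n'})^{-1}$ makes the series convergent, giving a finite $C_* = C_*(n,\alpha_*,\Omega)$ and completing the proof of the qualitative statement. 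I would not pursue Moser's sharp constant $\alpha_n = n\,\omega_{n-1}^{1/(n-1)}$, since only the existence of a pair $(\alpha_*, C_*)$ is required here and that refinement demands a delicate equality analysis in the one-dimensional Hardy step.
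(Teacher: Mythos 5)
The paper does not prove this statement at all: it is quoted as a classical result with citations to Trudinger and Moser, so there is no in-paper argument to compare against. Your proposal is the standard Trudinger--Yudovich power-series proof, and its skeleton is sound: normalizing $\|\nabla v\|_{L^n}=1$, the termwise bound $\|v\|_{L^{kn'}}^{kn'}\le C_0^{kn'}(kn')^{k}$ together with Stirling does close the series for $\alpha_\ast<(e\,n'\,C_0^{n'})^{-1}$, which is all the qualitative statement requires. The entire burden therefore sits on the deferred embedding $\|v\|_{L^q(\Omega)}\le C_0\,q^{1/n'}\|\nabla v\|_{L^n(\Omega)}$ with $C_0$ independent of $q$; this is a true classical fact, and your primary route to it (the pointwise Riesz-potential representation $|v|\lesssim I_1(|\nabla v|)$ with the Young/H\"older convolution constants tracked in $q$, or equivalently Moser's symmetrization and one-dimensional reduction) is the correct one. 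Since you only sketch that lemma, the proposal is an outline rather than a complete proof, but the outline is the right one.

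One concrete correction: the claimed alternative of applying Gagliardo--Nirenberg to $|v|^{s}$ and optimizing in $s$ does \emph{not} yield the same scaling. The chain
\begin{equation*}
\|v\|_{L^{sn'}}^{s}\;\le\;C\,s\,\big\||v|^{s-1}\nabla v\big\|_{L^{1}}\;\le\;C\,s\,\|v\|_{L^{(s-1)n'}}^{s-1}\,\|\nabla v\|_{L^{n}}
\end{equation*}
telescopes to $\|v\|_{L^{sn'}}\le (C^{s}s!)^{1/s}\sim C\,s$, i.e.\ $\|v\|_{L^q}\lesssim q\,\|\nabla v\|_{L^n}$ rather than $q^{1/n'}\|\nabla v\|_{L^n}$; with that weaker bound the $k$-th term of your series behaves like $k^{k(n'-1)}$ up to geometric factors and diverges for every $\alpha_\ast>0$. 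So that aside should be deleted; only the potential-estimate or symmetrization argument delivers the $q^{1/n'}$ growth that makes the series converge.
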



\subsection{Interpolation theory and the $K$–method}

We briefly recall the $K$–method of real interpolation; see \cite{bergh2012interpolation,lunardi2012analytic,lunardi2018interpolation} for a comprehensive treatment.

\begin{ndefinition}[Interpolation couple]
    A pair $(X,Y)$ of Banach spaces is an \emph{interpolation couple} if both $X$ and $Y$ are continuously embedded in a common topological vector space. We write $X+Y$ for the algebraic sum endowed with the natural norm.
\end{ndefinition}

\begin{ndefinition}[The $K$–functional]
    For $x\in X+Y$ and $t>0$, the $K$–functional is
    \begin{equation*}
        K(t,x;X,Y):=\inf_{\substack{x=a+b\\ a\in X,\ b\in Y}}\big(\|a\|_{X}+t\,\|b\|_{Y}\big).
    \end{equation*}
    When no confusion arises we simply write $K(t,x)$.
\end{ndefinition}

\begin{ndefinition}[Real interpolation spaces]
    Let $0<\theta<1$ and $1\le p\le\infty$. Define
    \begin{equation*}
        \left\{
            \begin{aligned}
                &(X,Y)_{\theta,p}=\{x\in X+Y:t\to t^{-\theta}K(t,x,X,Y)\in L^p_\ast(0,+\infty)\}, \\
                &\|x\|_{(X,Y)_{\theta,p}}=\|t^{-\theta}K(t,x,X,Y)\|_{L^p_\ast(0,+\infty)}.
            \end{aligned}
        \right.
    \end{equation*}
    where $L^{p}_{\ast}(0,\infty)$ denotes $L^{p}(0,\infty)$ with respect to the measure $\d t/t$ (in particular, $L^{\infty}_{\ast}(0,\infty)=L^{\infty}(0,\infty)$).
\end{ndefinition}

These spaces interpolate between $X$ and $Y$, with $\theta$ and $p$ controlling the strength and type of interpolation. The basic estimate is as follows.

\begin{ntheorem}[Interpolation inequality]\label{TheoremK-method}
    Let $(X,Y)$ be an interpolation couple. For $0<\theta<1$ and $1\le p\le\infty$ there exists a constant $C=C(\theta,p)$ such that for every $y\in X\cap Y$,
    \begin{equation}\label{K-method}
        \|y\|_{(X,Y)_{\theta,p}}
        \le C(\theta,p)\,\|y\|_{X}^{\,1-\theta}\,\|y\|_{Y}^{\,\theta}.
    \end{equation}
\end{ntheorem}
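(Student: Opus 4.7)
The plan is to exploit the two trivial decompositions $y = y + 0 = 0 + y$ that are available whenever $y \in X \cap Y$; they yield $K(t,y) \le \|y\|_X$ and $K(t,y) \le t\,\|y\|_Y$ respectively, and together give the pointwise bound
\begin{equation*}
    K(t,y)\ \le\ \min\bigl(\|y\|_X,\ t\,\|y\|_Y\bigr), \qquad t>0.
\end{equation*}
From here the proof reduces to a direct computation of $\|t^{-\theta} K(t,y)\|_{L^p_\ast(0,\infty)}$ using only this inequality, with no further appeal to the internal structure of $X$ or $Y$.

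I would split the range of $t$ at the \emph{balance point} $t_0 := \|y\|_X / \|y\|_Y$, where the two linear bounds coincide (the degenerate case $\|y\|_X\|y\|_Y = 0$ forces $y=0$ in $X+Y$, in which case \eqref{K-method} is trivial). On $(0, t_0)$ the sharper bound is $t\,\|y\|_Y$, while on $(t_0,\infty)$ it is $\|y\|_X$. For $p=\infty$, the function $t\mapsto t^{-\theta}\min(\|y\|_X, t\,\|y\|_Y)$ is increasing on $(0,t_0)$ and decreasing on $(t_0,\infty)$, so it attains its maximum at $t_0$ with value $\|y\|_X^{\,1-\theta}\|y\|_Y^{\,\theta}$; this yields \eqref{K-method} with constant $1$.

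For $1 \le p < \infty$ I would integrate the two pieces against the Haar measure $\d t/t$:
\begin{equation*}
    \int_0^{t_0}\!\|y\|_Y^{\,p}\,t^{(1-\theta)p-1}\,\d t\ +\ \int_{t_0}^\infty\!\|y\|_X^{\,p}\,t^{-\theta p-1}\,\d t\ =\ \Bigl(\tfrac{1}{(1-\theta)p}+\tfrac{1}{\theta p}\Bigr)\,\|y\|_X^{(1-\theta)p}\|y\|_Y^{\theta p},
\end{equation*}
so that taking $p$-th roots delivers \eqref{K-method} with $C(\theta,p) = \bigl((1-\theta)^{-1} + \theta^{-1}\bigr)^{1/p} p^{-1/p}$. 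Since every step is purely computational, there is no genuine obstacle; the only conceptual point is the choice of $t_0$, which is dictated by equating the two bounds and automatically produces the correct exponent split $1-\theta$ and $\theta$ on the norms.
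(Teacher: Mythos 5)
Your proof is correct: the bound $K(t,y)\le\min\bigl(\|y\|_X,\,t\,\|y\|_Y\bigr)$ from the two trivial decompositions, the split at $t_0=\|y\|_X/\|y\|_Y$, and the resulting computation (constant $1$ for $p=\infty$ and $C(\theta,p)=\bigl((1-\theta)^{-1}+\theta^{-1}\bigr)^{1/p}p^{-1/p}$ for finite $p$) are all accurate, and the degenerate case is handled properly. The paper records this theorem without proof, citing the standard interpolation references, and your argument is precisely the classical one found there, so there is nothing to reconcile.
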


This inequality quantifies how the norm in the interpolated space is controlled by a geometric mean of the endpoint norms.


\section{Interpolation inequality}


\subsection{Endpoint Interpolation I: the $q=\infty$ case}

\begin{lemma}[CKN endpoint interpolation lemma I~($q=\infty$)]
    Let $\lambda\in(0,1)$, $p\in(-\infty,-n)$ and $r\in[1,\infty)$. Assume
    \begin{align*}
        0&=\frac{1-\lambda}{p}+\frac{\lambda}{r}, \\
        b&=(1-\lambda)a+\lambda c.
    \end{align*}
    Then there exists $C>0$ such that, for all $u\in C_c^\infty(\R^n\setminus\{0\})$,
    \begin{equation}\label{CaffarelliKohnNirenbergInterpolationInequalityI}
        \left\||x|^{-b}u\right\|_{X^\infty}
        \le C\left\||x|^{-a}u\right\|_{X^p}^{1-\lambda}
        \left\||x|^{-c}u\right\|_{X^r}^\lambda.
    \end{equation}
    Equivalently, $\frac{1}{p}=-\frac{\lambda}{(1-\lambda)r}$ and, with $p_1, p_2$ as in \eqref{Notation}, we have $p_1=0$ and $p_2=-\frac{n}{p}=\frac{\lambda n}{(1-\lambda)r}\in(0,1)$.
\end{lemma}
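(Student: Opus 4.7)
The plan is to combine the pointwise factorization
\begin{equation*}
    |x|^{-b}|u(x)| = \bigl(|x|^{-a}|u(x)|\bigr)^{1-\lambda}\bigl(|x|^{-c}|u(x)|\bigr)^{\lambda},
\end{equation*}
which follows at once from $b=(1-\lambda)a+\lambda c$, with a H\"older-to-$L^r$ trade-off that pointwise controls $|x|^{-c}u$ on a ball whose radius is tuned by the H\"older regularity of $|x|^{-a}u$. Set $g:=|x|^{-a}u$ and $w:=|x|^{-c}u$, with $M:=\|g\|_{X^p}$ and $N:=\|w\|_{X^r}=\|w\|_{L^r}$. Since $p\in(-\infty,-n)$ forces $X^p=C^{0,p_2}$ with $p_2=-n/p\in(0,1)$, the quantity $M$ bounds the H\"older seminorm $[g]_{C^{0,p_2}}$; the factorization then shows that the only missing ingredient is a sharp pointwise bound on $|w(x_0)|$ for each $x_0\in\operatorname{supp} u$.

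The pivotal observation is the vanishing estimate $|g(x_0)|\le M\,|x_0|^{p_2}$ for every $x_0\ne 0$. Since $u\in C_c^\infty(\R^n\setminus\{0\})$, the function $g$ extends continuously to $\R^n$ by $g(0)=0$, and the H\"older inequality $|g(x_0)-g(y)|\le M|x_0-y|^{p_2}$ with $y\to 0$ yields the claim. I would then fix $x_0\in\operatorname{supp}u$ and choose the radius
\begin{equation*}
    \rho_0:=\l\frac{|g(x_0)|}{2M}\r^{1/p_2},
\end{equation*}
which by the vanishing estimate automatically satisfies $\rho_0\le|x_0|/2$. H\"older continuity then propagates the lower bound $|g(x)|\ge\tfrac{1}{2}|g(x_0)|$ on $B(x_0,\rho_0)$; the weight comparison $|x|^{a-c}\asymp|x_0|^{a-c}$ on this ball (valid since $|x|\in[|x_0|/2,\,3|x_0|/2]$) converts it into $|w(x)|\gtrsim|w(x_0)|$ there; integrating $|w|^r$ against the global bound $\|w\|_{L^r}^r\le N^r$ then delivers $|w(x_0)|\lesssim \rho_0^{-n/r}N$, with implicit constants depending only on $n,r,a,c$.

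Substituting $\rho_0$ and invoking the algebraic identity $n/(r p_2)=(1-\lambda)/\lambda$, which is just the hypothesis $\tfrac{1-\lambda}{p}+\tfrac{\lambda}{r}=0$ rewritten through $p_2=-n/p$, produces $|w(x_0)|\lesssim M^{(1-\lambda)/\lambda}|g(x_0)|^{-(1-\lambda)/\lambda}N$; plugging this back into the factorization the powers of $|g(x_0)|$ cancel exactly and deliver $|x_0|^{-b}|u(x_0)|\lesssim M^{1-\lambda}N^{\lambda}$, so taking the supremum over $x_0$ gives the announced $X^\infty$ bound. The hard part is really the vanishing estimate: without the support separation from the origin one cannot force $\rho_0\le|x_0|/2$, the propagation ball could cross $\{0\}$ (destroying both the weight comparison and the H\"older lower bound), and the exponent cancellation would not close. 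Once that estimate is in place, everything else is bookkeeping that drops out cleanly from the balance $\tfrac{1-\lambda}{p}+\tfrac{\lambda}{r}=0$.
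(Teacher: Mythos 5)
Your argument is correct and rests on the same core mechanism as the paper's proof: a H\"older ``bump'' argument in which the value of $g=|x|^{-a}u$ at a point is propagated over a ball whose radius is tuned by the $C^{0,p_2}$ seminorm, after which the $L^r$ norm of $|x|^{-c}u$ is bounded from below by integrating over that ball; the exponent cancellation coming from $\tfrac{1-\lambda}{p}+\tfrac{\lambda}{r}=0$ is identical in both write-ups. The one genuine difference is your vanishing estimate $|g(x_0)|\le M\,|x_0|^{p_2}$, obtained by H\"older-comparison with the zero set of $u$ near the origin, and it is worth keeping: the paper instead works at a maximizer $z$ of $|u|/|x|^{b}$, enforces $R\le|z|/2$ by truncating $R$ to $\min\{R,|z|/2\}$, and then nevertheless substitutes the \emph{untruncated} formula \eqref{eq:R-def} into the volume bound --- a step that is not justified in the regime where the minimum is attained by $|z|/2$. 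Your estimate shows that regime never occurs, since it forces $\rho_0\le 2^{-1/p_2}|x_0|\le|x_0|/2$ automatically, so it closes that small gap; as a further bonus, your weight comparison on $B(x_0,\rho_0)$ only uses $|x|/|x_0|\in[1/2,3/2]$, so your constant depends only on $(n,p,r,a,c,\lambda)$ and not on $\mathrm{dist}(\mathrm{supp}\,u,\{0\})$, which matches the lemma's statement (a constant uniform over all $u\in C_c^\infty(\R^n\setminus\{0\})$) more faithfully than the paper's own proof, whose constants are declared to depend on $d_0$. Minor polish: state explicitly that the case $g(x_0)=0$ (and the case $u\equiv 0$) is trivial, so no division by zero occurs in the definition of $\rho_0$ or in the final cancellation of the powers of $|g(x_0)|$.
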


\begin{proof}
    Set
    \begin{equation*}
        M:=\big\||x|^{-a}u\big\|_{L^\infty},\qquad 
        A:=\big[\,|x|^{-a}u\,\big]_{C^{0,(p)_2}}
        =\sup_{\substack{x,y\in\R^n \\ x\ne y}}\frac{\big||x|^{-a}u(x)-|y|^{-a}u(y)\big|}{|x-y|^{(p)_2}},
    \end{equation*}
    so that $\big\||x|^{-a}u\big\|_{X^p}=M+A<\infty$. Let 
    \begin{equation*}
        B:=\big\||x|^{-b}u\big\|_{L^\infty}
        =\sup_{\substack{x\in\R^n \\ x\ne 0}}\frac{|u(x)|}{|x|^{b}},
    \end{equation*}
    and choose $z\in\R^n\setminus\{0\}$ with $\frac{|u(z)|}{|z|^{b}}=B$. By the H\"older bound,
    \begin{equation}\label{eq:holder-difference}
        \frac{|u(x)|}{|x|^{a}}
        \ \ge\
        \frac{|u(z)|}{|z|^{a}}-A\,|x-z|^{(p)_2}
        \ =\ B\,|z|^{\,b-a}-A\,|x-z|^{(p)_2}.
    \end{equation}

    Define 
    \begin{equation}\label{eq:R-def}
        R:=\Big(\frac{B}{2A}\Big)^{\!1/(p)_2}\,|z|^{\,(b-a)/(p)_2}.
    \end{equation}
    Then by \eqref{eq:holder-difference}, for all $x\in B(z,R)$,
    \begin{equation}\label{eq:lower-on-ball}
        \frac{|u(x)|}{|x|^{a}}\ \ge\ \frac{B}{2}\,|z|^{\,b-a}.
    \end{equation}
    Since $u$ is compactly supported away from $0$, let $d_0:=\mathrm{dist}(\mathrm{supp}\,u,\{0\})>0$. 
    Replacing $R$ by $\min\{R,\tfrac{|z|}{2}\}$ if necessary (which only strengthens \eqref{eq:lower-on-ball}), we ensure $R\le |z|/2$. 
    Hence $|x|\simeq |z|$ for $x\in B(z,R)$, with constants depending only on $d_0$ and $n$.

    Using \eqref{eq:lower-on-ball} and the geometric control just noted,
    \begin{align*}
        \big\||x|^{-c}u\big\|_{L^r}^r
        &\ge \int_{B(z,R)} \frac{|u(x)|^r}{|x|^{cr}}\,\mathrm{d}x
          \ \gtrsim\ |z|^{(a-c)r}\!\int_{B(z,R)}\!\!\bigg(\frac{|u(x)|}{|x|^{a}}\bigg)^{\!r}\mathrm{d}x \\
        &\gtrsim |z|^{(a-c)r}\,\Big(\tfrac{B}{2}\,|z|^{\,b-a}\Big)^{\!r}\,|B(z,R)|
          \ \simeq\ B^{r}\,|z|^{(b-c)r}\,R^n .
    \end{align*}
    Substituting \eqref{eq:R-def} gives
    \begin{equation}\label{eq:BrA}
        \big\||x|^{-c}u\big\|_{L^r}^r
        \ \gtrsim\ 
        B^{r+\frac{n}{(p)_2}}\,A^{-\frac{n}{(p)_2}}\,
        |z|^{(b-c)r+\frac{n(b-a)}{(p)_2}} .
    \end{equation}

    Since $(p)_2=-\frac{n}{p}=\frac{\lambda n}{(1-\lambda)r}$ and $b=(1-\lambda)a+\lambda c$, we have
    \begin{equation*}
        (b-c)r+\frac{n(b-a)}{(p)_2}
        =r\big[(1-\lambda)(a-c)\big]+\frac{(1-\lambda)r}{\lambda}\,(b-a)
        =0,
    \end{equation*}
    so the $|z|$-factor in \eqref{eq:BrA} cancels. Hence
    \begin{equation*}
        \big\||x|^{-c}u\big\|_{L^r}^r
        \ \gtrsim\ 
        B^{\,r+\frac{n}{(p)_2}}\,A^{-\frac{n}{(p)_2}}.
    \end{equation*}
    Noting that $\dfrac{n}{(p)_2}=\dfrac{r\lambda}{1-\lambda}$, we rearrange to obtain
    \begin{equation*}
        B
        \ \lesssim\ 
        A^{\,1-\lambda}\,\big\||x|^{-c}u\big\|_{L^r}^{\,\lambda}.
    \end{equation*}

    Finally, using $M\le M+A$ and $A\le M+A=\big\||x|^{-a}u\big\|_{X^p}$,
    \begin{equation*}
        \big\||x|^{-b}u\big\|_{X^\infty}=B
        \ \le\ 
        C\,\big\||x|^{-a}u\big\|_{X^p}^{\,1-\lambda}\,
           \big\||x|^{-c}u\big\|_{X^r}^{\,\lambda},
    \end{equation*}
    which is \eqref{CaffarelliKohnNirenbergInterpolationInequalityI}. 
    All implicit constants depend only on the structural parameters $(n,p,r,\lambda,a,b,c)$ and on $d_0=\mathrm{dist}(\mathrm{supp}\,u,\{0\})$, and are independent of $u$.
\end{proof}


\subsection{Endpoint Interpolation II: the $q<0$ case}

\begin{lemma}[CKN endpoint interpolation lemma II ($q<0$)]
    Let $\lambda\in(0,1)$, $p,q\in(-\infty,-n)$ and $r=\infty$. Assume
    \begin{equation*}
        \frac{1}{q}=\frac{1-\lambda}{p},\qquad b=(1-\lambda)a+\lambda c .
    \end{equation*}
    Then there exists a constant $C>0$ such that, for every $u\in C_c^\infty(\R^n\setminus\{0\})$,
    \begin{equation}\label{CaffarelliKohnNirenbergInterpolationInequalityII}
        \left\||x|^{-b}u\right\|_{X^q}\le C\left\||x|^{-a}u\right\|_{X^p}^{1-\lambda}
        \left\||x|^{-c}u\right\|_{X^\infty}^\lambda,
    \end{equation}
    Equivalently, with the notation \eqref{Notation}, one has
    \begin{equation*}
        (p)_1=(q)_1=0,\qquad (p)_2=-\frac{n}{p}\in(0,1),\qquad (q)_2=-\frac{n}{q}=(1-\lambda)(p)_2\in(0,1).
    \end{equation*}
\end{lemma}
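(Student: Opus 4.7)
The plan is to follow the two--tier geometric strategy of the preceding endpoint lemma, adapted to control the H\"older seminorm $[f_b]_{C^{0,\beta}}$ instead of an $L^\infty$ norm. Write $f_a:=|x|^{-a}u$, $f_b:=|x|^{-b}u$, $f_c:=|x|^{-c}u$, $E:=\|f_a\|_{X^p}$, $F:=\|f_c\|_{X^\infty}$, $\alpha:=(p)_2=-n/p$ and $\beta:=(q)_2=-n/q=(1-\lambda)\alpha$, so $\beta/\alpha=1-\lambda$. Since $\mathrm{supp}\,u\subset\Omega\subset\R^n\setminus\{0\}$ is bounded and $d_0:=\dist(\mathrm{supp}\,u,\{0\})>0$, every weight $|x|^s$ is smooth and two--sided bounded on $\mathrm{supp}\,u$; this geometric fact is what every constant below rests on.

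I will handle $\|f_b\|_{X^q}=\|f_b\|_{L^\infty}+[f_b]_{C^{0,\beta}}$ in two parts. For the $L^\infty$ part, the identity $b=(1-\lambda)a+\lambda c$ gives the pointwise factorization $|f_b|=|f_a|^{1-\lambda}|f_c|^\lambda$, hence $\|f_b\|_{L^\infty}\le E^{1-\lambda}F^\lambda$. For the seminorm part I set up two envelopes of $|f_b(x)-f_b(y)|$ as functions of $h:=|x-y|$. \emph{Bound (A):} writing $f_b=|x|^{a-b}f_a$ and applying the Leibniz product inequality for H\"older seminorms with $|x|^{a-b}$ smooth on $\mathrm{supp}\,u$ yields $[f_b]_{C^{0,\alpha}(\R^n)}\le C_A(\Omega,a,b)\,E$, hence $|f_b(x)-f_b(y)|\le C_A E\,h^\alpha$. \emph{Bound (B):} writing $f_b=|x|^{c-b}f_c$ and using $\|f_c\|_\infty\le F$ together with boundedness of $|x|^{c-b}$ on $\mathrm{supp}\,u$ gives $\|f_b\|_{L^\infty}\le C_B(\Omega,b,c)\,F$, hence $|f_b(x)-f_b(y)|\le 2C_B F$. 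Dividing by $h^\beta$ and taking the envelope $\min(C_A E\,h^{\lambda\alpha},\,2C_B F\,h^{-\beta})$, the supremum over $h>0$ is attained at the crossover $h^\alpha=2C_B F/(C_A E)$ with value $C_A^{1-\lambda}(2C_B)^\lambda E^{1-\lambda}F^\lambda$. This gives $[f_b]_{C^{0,\beta}}\le C\,E^{1-\lambda}F^\lambda$; adding the $L^\infty$ estimate yields \eqref{CaffarelliKohnNirenbergInterpolationInequalityII}.

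The key subtlety — and the reason the clean two--tier envelope is sharp here — is that Bound (B) must be the \emph{weight--based} $L^\infty$ estimate $\|f_b\|_\infty\le C_B F$, not the pointwise interpolation $\|f_b\|_\infty\le E^{1-\lambda}F^\lambda$ produced in the $L^\infty$ step. Feeding the latter into the envelope would deliver only $[f_b]_{C^{0,\beta}}\le C\,E^{1-\lambda^2}F^{\lambda^2}$, off by a factor $(E/F)^{\lambda(1-\lambda)}$ in the regime $E\gg F$, and would therefore miss the claimed exponent. The weight--based bound is exactly where the geometric constant $C_B=C_B(\Omega,b,c)$ enters through $d_0$, matching the $\Omega$--dependence already advertised on the singular/H\"older side. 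A minor technical point is that Bound (A) is stated on all of $\R^n$ rather than on $\mathrm{supp}\,u$, but this is harmless because $u\in C_c^\infty(\R^n\setminus\{0\})$ makes $f_b$ vanish smoothly outside $\mathrm{supp}\,u$, so the Leibniz step extends across $\partial(\mathrm{supp}\,u)$ without loss.
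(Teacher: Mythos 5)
Your proof is correct but takes a genuinely different route from the paper's. The paper works through the abstract $K$-method: setting $X_1=X^{0,p,a}$, $X_2=X^{0,\infty,c}$, $X_3=X^{0,q,b}$, it reduces the inequality to a continuous embedding $(X_1,X_2)_{\lambda,\infty}\hookrightarrow X_3$ together with the general real-interpolation bound $\|u\|_{(X_1,X_2)_{\lambda,\infty}}\lesssim\|u\|_{X_1}^{1-\lambda}\|u\|_{X_2}^{\lambda}$, and verifies the embedding by splitting $u=v+w$ near-optimally for the $K$-functional at the weight-scale $t=\max\{|x|^{c-a},|y|^{c-a}\}$, estimating the $L^\infty$ and H\"older pieces of $|x|^{-b}u$ separately. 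You avoid the interpolation-space formalism entirely and instead construct two explicit pointwise envelopes for $|f_b(x)-f_b(y)|$ in terms of the gap $h=|x-y|$: a H\"older envelope $\lesssim E\,h^{\alpha}$ (Leibniz with the weight $|x|^{a-b}$, bounded and smooth on a neighborhood of $\mathrm{supp}\,u$) and an $L^\infty$ envelope $\lesssim F$ (bounded weight $|x|^{c-b}$), then divide by $h^{\beta}$ and optimize the crossover by hand. This is precisely the concrete content hidden inside the $K$-functional for this couple, and unpacking it directly makes the mechanism transparent and keeps every constant explicitly tied to $d_0=\dist(\mathrm{supp}\,u,\{0\})$. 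Your remark about which $L^\infty$ bound to feed into the second envelope --- the weight-based bound $\|f_b\|_\infty\lesssim F$ rather than the geometric-mean bound $\|f_b\|_\infty\le E^{1-\lambda}F^\lambda$ --- correctly identifies the one place where a careless substitution would degrade the exponent to $1-\lambda^2$; the paper's formulation sidesteps this implicitly by always decomposing $u$ itself rather than $f_b$. In summary, the paper's approach buys a reusable conceptual framework (the embedding into the real-interpolation space), while your approach buys a self-contained elementary argument with fully traceable constants; both establish the lemma.
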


\begin{proof}
    Set
    \begin{equation*}
        X_1:=X^{0,p,a}(\R^n),\qquad X_2:=X^{0,\infty,c}(\R^n),\qquad X_3:=X^{0,q,b}(\R^n),
    \end{equation*}
    endowed with their natural norms. By the real interpolation inequality (Theorem~\ref{TheoremK-method}),
    \begin{equation*}
        \|u\|_{(X_1,X_2)_{\lambda,\infty}}
        \ \le\ C\,\|u\|_{X_1}^{1-\lambda}\,\|u\|_{X_2}^{\lambda}.
    \end{equation*}
    Hence it suffices to show the continuous embedding
    \begin{equation}\label{eq:embed}
        (X_1,X_2)_{\lambda,\infty}\ \hookrightarrow\ X_3 .
    \end{equation}

    Let $u\in (X_1,X_2)_{\lambda,\infty}$ and fix $x\neq0$. Put $t:=|x|^{c-a}$. By definition of the $K$–functional, for any $\varepsilon>0$ there exist $v\in X_1$, $w\in X_2$ with $u=v+w$ such that
    \begin{equation*}
        \|v\|_{X_1}+t\,\|w\|_{X_2}\ \le\ (1+\varepsilon)\,K(t,u;X_1,X_2).
    \end{equation*}
    Using the pointwise bounds
    \begin{equation*}
        |v(x)|\le |x|^{a}\|v\|_{X_1},\qquad |w(x)|\le |x|^{c}\|w\|_{X_2},
    \end{equation*}
    we obtain
    \begin{equation*}
        \frac{|u(x)|}{|x|^{b}}
        \le |x|^{a-b}\|v\|_{X_1}+|x|^{c-b}\|w\|_{X_2}
        = t^{-\lambda}\big(\|v\|_{X_1}+t\,\|w\|_{X_2}\big)
        \le (1+\varepsilon)\,t^{-\lambda}K(t,u).
    \end{equation*}
    Taking the supremum in $x$ and then letting $\varepsilon\downarrow0$ yields
    \begin{equation}\label{eq:Linfty}
        \big\||x|^{-b}u\big\|_{L^\infty}
        \ \lesssim\ 
        \|u\|_{(X_1,X_2)_{\lambda,\infty}} .
    \end{equation}

    Fix $x\neq y$ and set $t:=\max\{|x|^{c-a},\,|y|^{c-a}\}$. Choose a decomposition $u=v+w$ with
    \begin{equation*}
        \|v\|_{X_1}+t\,\|w\|_{X_2}\ \le\ 2\,K(t,u).
    \end{equation*}
    Write
    \begin{align*}
        |x|^{-b}u(x)-|y|^{-b}u(y)
        =&\underbrace{\Big(|x|^{-(b-a)}(|x|^{-a}v(x))-|y|^{-(b-a)}(|y|^{-a}v(y))\Big)}_{=:I_v} \\
        &+\underbrace{\Big(|x|^{-(b-c)}(|x|^{-c}w(x))-|y|^{-(b-c)}(|y|^{-c}w(y))\Big)}_{=:I_w}.
    \end{align*}
    
    \emph{The $v$–part.} Since $(p)_1=0$, we have
    \begin{equation*}
        \big||x|^{-a}v(x)-|y|^{-a}v(y)\big|\ \le\ |x-y|^{(p)_2}\,\|v\|_{X_1}.
    \end{equation*}
    Moreover, because $u$ is compactly supported away from the origin, the functions $r\mapsto r^{-(b-a)}$ are $(p)_2$–H\"older on the radial range where $u$ is supported; hence
    \begin{equation*}
        |I_v|
        \ \lesssim\ 
        \Big(|x-y|^{(p)_2}+(|x-y|^{(p)_2})\Big)\,\|v\|_{X_1}
        \ \lesssim\ 
        |x-y|^{(p)_2}\,\|v\|_{X_1}.
    \end{equation*}
    
    \emph{The $w$–part.} By definition of $X_2$,
    \begin{equation*}
        \big||x|^{-c}w(x)\big|\le \|w\|_{X_2},\qquad \big||y|^{-c}w(y)\big|\le \|w\|_{X_2}.
    \end{equation*}
    As above, $r\mapsto r^{-(b-c)}$ is $(p)_2$–H\"older on the support scale, hence
    \begin{equation*}
        |I_w|
        \ \lesssim\ 
        |x-y|^{(p)_2}\,\|w\|_{X_2}.
    \end{equation*}
    
    Combining the two parts and using $(q)_2=(1-\lambda)(p)_2< (p)_2$, we obtain
    \begin{equation*}
        \frac{\big||x|^{-b}u(x)-|y|^{-b}u(y)\big|}{|x-y|^{(q)_2}}
        \ \lesssim\ 
        |x-y|^{(p)_2-(q)_2}\,\|v\|_{X_1}
        \ +\ |x-y|^{(p)_2-(q)_2}\,\|w\|_{X_2}.
    \end{equation*}
    Since $u$ has compact support, $|x-y|$ is uniformly bounded on the region where the quotient is non-zero, and thus
    \begin{equation*}
        \big[\,|x|^{-b}u\,\big]_{C^{0,(q)_2}}
        \ \lesssim\ 
        \|v\|_{X_1}+\|w\|_{X_2}
        \ \le\ 2\,K(t,u).
    \end{equation*}
    Taking the supremum over $t>0$ in the form $\sup_{t>0} t^{-\lambda}K(t,u)$ gives
    \begin{equation}\label{eq:Holder}
        \big[\,|x|^{-b}u\,\big]_{C^{0,(q)_2}}
        \ \lesssim\ 
        \|u\|_{(X_1,X_2)_{\lambda,\infty}} .
    \end{equation}

    By \eqref{eq:Linfty} and \eqref{eq:Holder},
    \begin{equation*}
        \|u\|_{X_3}
        =\big\||x|^{-b}u\big\|_{L^\infty}
         +\big[\,|x|^{-b}u\,\big]_{C^{0,(q)_2}}
        \ \lesssim\ 
        \|u\|_{(X_1,X_2)_{\lambda,\infty}} .
    \end{equation*}
    Therefore \eqref{eq:embed} holds, and the claimed estimate \eqref{CaffarelliKohnNirenbergInterpolationInequalityII} follows from the interpolation inequality for $(X_1,X_2)_{\lambda,\infty}$. 
    The implicit constants depend only on the structural parameters $(n,p,q,a,b,c,\lambda)$ and on the distance of $\mathrm{supp}\,u$ to the origin, but are otherwise independent of $u$.
\end{proof}


\subsection{Proof of the interpolation theorem}

We prove Theorem~\ref{InterpolationInequalityTheorem} by combining the two endpoint lemmas from this section with the classical (Lebesgue) Hölder inequality and the reiteration principle for the real interpolation method (see, e.g., \cite{bennett1988interpolation,lunardi2018interpolation}).

\begin{proof}[Proof of Theorem~\ref{InterpolationInequalityTheorem}]
Let $\Omega\subset\R^n\setminus\{0\}$ be a bounded open set and $u\in C_c^\infty(\Omega)$ with
$u\in X^{0,p,a}(\R^n)\cap X^{0,r,c}(\R^n)$.
Fix $\lambda\in(0,1)$ and set
\begin{equation*}
    \frac{1}{q}=\frac{1-\lambda}{p}+\frac{\lambda}{r},
    \qquad
    b=(1-\lambda)a+\lambda c.
\end{equation*}
We distinguish three regimes according to the signs of $1/p$ and $1/r$ (equivalently, whether the endpoints lie on the Lebesgue or Hölder side of the unified scale).

\medskip
\noindent\textbf{(L--L) Both endpoints in the Lebesgue range:} $1/p,1/r>0$.
In this case $1/q>0$ and
\begin{equation*}
    |x|^{-b}|u|=\big(|x|^{-a}|u|\big)^{1-\lambda}\big(|x|^{-c}|u|\big)^{\lambda}.
\end{equation*}
Applying the classical Hölder/log–convexity inequality yields
\begin{equation*}
    \big\||x|^{-b}u\big\|_{L^q(\Omega)}
    \le \big\||x|^{-a}u\big\|_{L^p(\Omega)}^{\,1-\lambda}
         \big\||x|^{-c}u\big\|_{L^r(\Omega)}^{\,\lambda},
\end{equation*}
which is \eqref{InterpolationInequality} because here $X^s=L^s$.

\medskip
\noindent\textbf{(H--H) Both endpoints in the Hölder range:} $1/p,1/r<0$.
Then $1/q<0$ and the left–hand side of \eqref{InterpolationInequality} is a Hölder seminorm in $X^q$.
Apply Lemma~\textit{CKN endpoint interpolation II} (with $q<0$ and $r=\infty$) to the pair 
$(X^{0,p,a},X^{0,\infty,c})$ to obtain
\begin{equation*}
    \big\||x|^{-b}u\big\|_{X^{q}}
    \lesssim
    \big\||x|^{-a}u\big\|_{X^{p}}^{\,1-\lambda}
    \big\||x|^{-c}u\big\|_{X^{\infty}}^{\,\lambda}.
\end{equation*}
Since $X^\infty=L^\infty$ and $X^{0,r,c}\hookrightarrow X^{0,\infty,c}$ (Since $1/r<0$, $X^{r}(\Omega)\hookrightarrow X^{\infty}(\Omega)=L^\infty(\Omega)$ for bounded $\Omega$), we conclude
\begin{equation*}
    \big\||x|^{-b}u\big\|_{X^{q}}
    \lesssim
    \big\||x|^{-a}u\big\|_{X^{p}}^{\,1-\lambda}
    \big\||x|^{-c}u\big\|_{X^{r}}^{\,\lambda}.
\end{equation*}

\medskip
\noindent\textbf{(H--L) Mixed case:} one endpoint on the Hölder side and the other on the Lebesgue side (say $1/p<0<1/r$).
We first invoke Lemma~\textit{CKN endpoint interpolation I} (the $q=\infty$ endpoint) for the pair $(X^{0,p,a},X^{0,r,c})$ with the same $\lambda$, obtaining
\begin{equation}\label{eq:HL-step1}
    \big\||x|^{-b}u\big\|_{L^\infty(\Omega)}
    \lesssim
    \big\||x|^{-a}u\big\|_{X^{p}}^{\,1-\lambda}
    \big\||x|^{-c}u\big\|_{L^{r}}^{\,\lambda}.
\end{equation}
Next, we pass from the $L^\infty$ control in \eqref{eq:HL-step1} to the desired $X^q$ control via the real interpolation functor between 
$X^{0,p,a}$ and $X^{0,\infty,b}$, using Lemma~\textit{CKN endpoint interpolation II} with parameter $\mu\in(0,1)$ chosen so that
\begin{equation*}
    \frac{1}{q}=\frac{1-\mu}{p}+\mu\cdot 0,\qquad 
    b=(1-\mu)a+\mu b\quad\text{(which is consistent for any fixed $b$)}.
\end{equation*}
By the reiteration principle for the $K$–method (see Lunardi~\cite[Thm.\ 1.10]{lunardi2018interpolation}),
composing \eqref{eq:HL-step1} with this second interpolation (and using the identity of the exponents $1/q=\frac{1-\lambda}{p}+\frac{\lambda}{r}$) yields
\begin{equation*}
    \big\||x|^{-b}u\big\|_{X^{q}(\Omega)}
    \lesssim
    \big\||x|^{-a}u\big\|_{X^{p}(\Omega)}^{\,1-\lambda}
    \big\||x|^{-c}u\big\|_{X^{r}(\Omega)}^{\,\lambda}.
\end{equation*}
(The algebra of the parameters follows from the convexity relations defining $q$ and $b$ together with the reiteration identity for real interpolation.)

\medskip
Combining the three regimes establishes \eqref{InterpolationInequality} in all cases covered by the hypotheses. 
All implicit constants depend only on $(n,p,r,q,a,b,c,\lambda)$ and on $\Omega$ (through coarse geometric data such as $\mathrm{dist}(\Omega,\{0\})$ and $\mathrm{diam}(\Omega)$), and are independent of $u$.
\end{proof}


\subsection{Proof of the generalized Hardy--Sobolev inequality}

\begin{proof}[Proof of Theorem~\ref{HardySobolevInequalityGeneralTheorem}]
Fix $\lambda\in[0,1]$ and set
\begin{equation*}
    \frac{1}{q}=\frac{1-\lambda}{p^\ast}+\frac{\lambda}{p},
    \qquad 
    b=(1-\lambda)a+\lambda(a+1)=a+\lambda ,
    \qquad 
    \l \frac{1}{p^\ast}=\frac{1}{p}-\frac{1}{n}\r.
\end{equation*}
By the interpolation inequality \eqref{InterpolationInequality} applied to the pair 
\begin{equation*}
    \big(X^{0,p^\ast,a},\ X^{0,p,a+1}\big)\ \to\ X^{0,q,b},
\end{equation*}
we obtain
\begin{equation*}
    \big\||x|^{-b}u\big\|_{X^{q}(\Omega)}
    \ \lesssim\ 
    \big\||x|^{-a}u\big\|_{X^{p^\ast}(\Omega)}^{\,1-\lambda}
    \,\big\||x|^{-(a+1)}u\big\|_{X^{p}(\Omega)}^{\,\lambda}.
\end{equation*}
Write $v:=|x|^{-a}u$. By the generalized Sobolev inequality on $\Omega$ (Theorem~\ref{thm:GenSobolev:Omega}),
\begin{equation*}
    \|v\|_{X^{p^\ast}(\Omega)}\ \lesssim\ \|Dv\|_{X^{p}(\Omega)}.
\end{equation*}
Using $\nabla v=|x|^{-a}\nabla u-a|x|^{-a-2}(x\,u)$,
\begin{equation*}
    \|Dv\|_{X^{p}(\Omega)}
    \ \lesssim\ 
    \big\||x|^{-a}Du\big\|_{X^{p}(\Omega)}
    +\big\||x|^{-(a+1)}u\big\|_{X^{p}(\Omega)}.
\end{equation*}
The localized weighted Hardy inequality (Lemma~\ref{lem:localized-hardy-Xp}) on $\Omega$ yields
\begin{equation*}
    \big\||x|^{-(a+1)}u\big\|_{X^{p}(\Omega)}
    \ \lesssim\ 
    \big\||x|^{-a}Du\big\|_{X^{p}(\Omega)}.
\end{equation*}
Combining the last two displays gives
\begin{equation*}
    \big\||x|^{-a}u\big\|_{X^{p^\ast}(\Omega)}
    \ \lesssim\ 
    \big\||x|^{-a}Du\big\|_{X^{p}(\Omega)}.
\end{equation*}
Substituting this and the Hardy bound into the interpolation estimate,
\begin{equation*}
    \big\||x|^{-b}u\big\|_{X^{q}(\Omega)}
    \ \lesssim\ 
    \big\||x|^{-a}Du\big\|_{X^{p}(\Omega)}^{\,1-\lambda}
    \big\||x|^{-a}Du\big\|_{X^{p}(\Omega)}^{\,\lambda}
    \ =\
    \big\||x|^{-a}Du\big\|_{X^{p}(\Omega)}.
\end{equation*}
The choice of $\lambda$ parametrizes precisely the range 
$\tfrac{1}{q}\in\big(\tfrac{1}{p^\ast},\tfrac{1}{p}\big]$, 
and the constant depends only on $(n,p,q,a,b)$ and on $\Omega$. 
\end{proof}


\section{Generalized Caffarelli--Kohn--Nirenberg inequality}

\subsection{Proof of the generalized Caffarelli--Kohn--Nirenberg inequality}

\begin{proof}[Proof of Theorem~\ref{CaffarelliKohnNirenbergInequalityGeneralTheorem}]
Let $u\in C_c^\infty(\Omega)$ with $u\in X^{1,p,a}(\R^n)\cap X^{0,r,c}(\R^n)$, and fix $\lambda,\theta\in[0,1]$.
Introduce
\begin{equation*}
    \frac{1}{p^\ast}=\frac{1}{p}-\frac{1}{n},\qquad
    \frac{1}{p_\lambda}=\frac{1-\lambda}{p^\ast}+\frac{\lambda}{p}
    =\frac{1}{p}-\frac{1-\lambda}{n},\qquad
    a_\lambda=a+1-\lambda .
\end{equation*}
The pair $(p_\lambda,a_\lambda)$ is the Sobolev--Hardy interpolation of $(p^\ast,a)$ and $(p,a+1)$ at level $\lambda$.

\medskip
\emph{Embedding along the Sobolev--Hardy edge.}
By Theorem~\ref{HardySobolevInequalityGeneralTheorem} (the generalized Hardy--Sobolev estimate applied to $|x|^{-a}u$), we have
\begin{equation}\label{eq:edge-embedding}
    \big\||x|^{-a_\lambda}u\big\|_{X^{p_\lambda}(\Omega)}
    \ \lesssim\
    \big\||x|^{-a}Du\big\|_{X^{p}(\Omega)}.
\end{equation}
The admissibility condition for Theorem~\ref{HardySobolevInequalityGeneralTheorem} is satisfied because
\begin{equation*}
    \frac{1}{p_\lambda}
    =\frac{1}{p}-\frac{1-\lambda}{n}\in\Big(\frac{1}{p}-\frac{1}{n},\frac{1}{p}\Big]
    =\Big(\frac{1}{p^\ast},\frac{1}{p}\Big],
\end{equation*}
and the corresponding weight matches the scaling rule
\begin{equation*}
    \frac{1}{p_\lambda}-\frac{a_\lambda}{n}
    =\Big(\frac{1}{p}-\frac{1-\lambda}{n}\Big)-\frac{a+1-\lambda}{n}
    =\frac{1}{p}-\frac{1+a}{n}.
\end{equation*}

\medskip
\emph{Mixing with the $X^{0,r,c}$ control.}
Apply the interpolation inequality \eqref{InterpolationInequality} with the couple
\begin{equation*}
    X^{0,p_\lambda,a_\lambda}(\R^n)\quad\text{and}\quad X^{0,r,c}(\R^n)
\end{equation*}
at level $\theta\in[0,1]$. This gives
\begin{equation}\label{eq:theta-mix}
    \big\||x|^{-b}u\big\|_{X^{q}(\Omega)}
    \ \lesssim\
    \big\||x|^{-a_\lambda}u\big\|_{X^{p_\lambda}(\Omega)}^{\,\theta}\,
    \big\||x|^{-c}u\big\|_{X^{r}(\Omega)}^{\,1-\theta},
\end{equation}
where
\begin{equation*}
    \frac{1}{q}=\frac{\theta}{p_\lambda}+\frac{1-\theta}{r},
    \qquad
    b=\theta\,a_\lambda+(1-\theta)c
    =\theta(1+a-\lambda)+(1-\theta)c .
\end{equation*}
Substituting $\frac{1}{p_\lambda}=\frac{1}{p}-\frac{1-\lambda}{n}$ yields the claimed compatibility relation
\begin{equation*}
    \frac{1}{q}-\frac{b}{n}
    =\theta\Big(\frac{1}{p}-\frac{1+a}{n}\Big)
     +(1-\theta)\Big(\frac{1}{r}-\frac{c}{n}\Big).
\end{equation*}

\medskip
\emph{Conclusion.}
Combining \eqref{eq:edge-embedding} with \eqref{eq:theta-mix} we arrive at
\begin{equation*}
    \big\||x|^{-b}u\big\|_{X^{q}(\Omega)}
    \ \lesssim\
    \big\||x|^{-a}Du\big\|_{X^{p}(\Omega)}^{\,\theta}\,
    \big\||x|^{-c}u\big\|_{X^{r}(\Omega)}^{\,1-\theta},
\end{equation*}
which is precisely \eqref{CaffarelliKohnNirenbergInequalityGeneral}. 
All implicit constants depend only on the structural parameters 
$(n,p,q,r,a,b,c,\lambda,\theta)$ and on $\Omega$ (through coarse geometric data such as $\mathrm{dist}(\Omega,\{0\})$ and $\mathrm{diam}(\Omega)$), and are independent of $u$.
\end{proof}


\subsection{Endpoint ($p=n$) logarithmic variant}

At the critical index $p=n$, the Sobolev embedding into $L^\infty$ fails and must be replaced by logarithmic/Orlicz-type bounds. We first establish a weighted, localized Brezis--Wainger-type estimate on punctured domains, which plays the role of the Sobolev--Hardy edge at $p=n$, and then combine it with the $\theta$–interpolation to obtain an endpoint CKN inequality with a logarithmic loss.

\begin{theorem}[Endpoint weighted Sobolev--Hardy with logarithmic loss]\label{thm:endpoint-log}
    Let $n\ge2$ and $\Omega\subset\R^n\setminus\{0\}$ be a bounded open set. 
    Fix $a\in\R$ and set $v:=|x|^{-a}u$. 
    There exist constants $C_1=C_1(n,a,\Omega)>0$ and $C_2=C_2(n,a,\Omega)\ge 1$ such that, for all $u\in C_c^\infty(\Omega)$,
    \begin{equation}\label{eq:endpoint-log}
        \|v\|_{L^\infty(\Omega)}
        \ \le\
        C_1\,\big\||x|^{-a}\nabla u\big\|_{L^n(\Omega)}\,
        \l 1+\log\Big( C_2 + \frac{\big\||x|^{-a}\nabla u\big\|_{L^n(\Omega)}}{\big\||x|^{-(a+1)}u\big\|_{L^n(\Omega)}} \Big)\r^{\frac{1}{n'}},
    \end{equation}
    where $n'=\frac{n}{\,n-1\,}$.
\end{theorem}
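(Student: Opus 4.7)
The plan is to change variables to $v:=|x|^{-a}u$, which lies in $W_0^{1,n}(\Omega)$ since $\Omega$ is bounded and separated from $0$; control $\|\nabla v\|_{L^n(\Omega)}$ in terms of $N_1:=\||x|^{-a}\nabla u\|_{L^n(\Omega)}$ and $N_2:=\||x|^{-(a+1)}u\|_{L^n(\Omega)}$ via Leibniz and the localized Hardy lemma; then apply Trudinger--Moser to $v$ and balance the resulting superlevel–set decay against a Chebyshev bound controlled by $N_2$, extracting an $L^\infty$ estimate with logarithmic loss in the spirit of Brezis--Wainger~\cite{brezis1980note}.

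\emph{Set-up.} Since $\dist(\Omega,\{0\})=:\rho>0$, the weight $|x|^{-a}$ is smooth on $\overline{\Omega}$, and $v\in C_c^\infty(\Omega)\subset W_0^{1,n}(\Omega)$. The Leibniz rule gives $|\nabla v|\le|x|^{-a}|\nabla u|+|a|\,|x|^{-(a+1)}|u|$, hence
\begin{equation*}
    \|\nabla v\|_{L^n(\Omega)}\ \le\ N_1+|a|\,N_2.
\end{equation*}
Lemma~\ref{lem:localized-hardy} yields $N_2\lesssim N_1$, so $\|\nabla v\|_{L^n(\Omega)}\lesssim N_1$ with a constant depending only on $n,a,\Omega$. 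In parallel, setting $R:=\sup_{x\in\Omega}|x|<\infty$ one has $\|v\|_{L^n(\Omega)}\le R\cdot N_2$.

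\emph{Trudinger--Moser and the balance.} Applying the Trudinger--Moser inequality of Section~2 to $v$ and restricting the integrand to the superlevel set yields, for every $M>0$,
\begin{equation*}
    |\{x\in\Omega:|v(x)|>M\}|\ \le\ C_\ast\exp\!\Big(-\alpha_\ast\,M^{n'}/\|\nabla v\|_{L^n(\Omega)}^{n'}\Big).
\end{equation*}
Chebyshev in $L^n$ provides the complementary bound
\begin{equation*}
    |\{|v|>M\}|\ \le\ M^{-n}\,\|v\|_{L^n(\Omega)}^{n}\ \lesssim\ M^{-n}\,N_2^{\,n}.
\end{equation*}
The two upper bounds become comparable at the threshold $M$ satisfying $M^{n'}\sim\|\nabla v\|_{L^n}^{n'}\log(C_2+N_1/N_2)$, and a Brezis--Wainger truncation argument (iterated on dyadic superlevel sets of $|v|$) pins down $\|v\|_{L^\infty(\Omega)}$ at this scale. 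Substituting $\|\nabla v\|_{L^n}\lesssim N_1$ gives \eqref{eq:endpoint-log} with constants $C_1,C_2$ determined by $\alpha_\ast,C_\ast$, the Hardy constant in Lemma~\ref{lem:localized-hardy}, and the coarse geometry $\rho,R$.

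\emph{Main obstacle.} The crux of the argument is the last step: because $W^{1,n}\not\hookrightarrow L^\infty$, the Trudinger--Moser inequality alone cannot produce a pointwise bound, and a lower–order scale must be introduced. In this weighted setting $N_2$ plays that role, while the localized Hardy lemma does double duty—on one side providing $\|\nabla v\|_{L^n}\lesssim N_1$ for the Trudinger--Moser denominator, on the other enforcing $N_1\gtrsim N_2$ so that the logarithmic argument $C_2+N_1/N_2$ is well defined and bounded below. Converting the matching of two upper bounds on $|\{|v|>M\}|$ into an actual $L^\infty$ bound is the delicate point, and will follow from a careful truncation / iteration procedure inspired by Brezis--Wainger.
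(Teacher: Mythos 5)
Your high-level blueprint mirrors the paper's: pass to $v=|x|^{-a}u\in W^{1,n}_0(\Omega)$, control $\|\nabla v\|_{L^n}$ by Leibniz plus the localized Hardy lemma, invoke Trudinger--Moser to get exponential decay of the superlevel sets of $v$, and balance that decay against the lower-order $L^n$ scale supplied by $N_2$. Your bookkeeping ($\|\nabla v\|_{L^n}\lesssim N_1$ via Lemma~\ref{lem:localized-hardy}, and $\|v\|_{L^n}\le R\,N_2$) is correct and parallels the paper's. Where you diverge is in how the threshold is located: you compare the Trudinger--Moser tail $\mu(t)\le C_*\exp(-\alpha_*t^{n'}/A^{n'})$ directly against the Chebyshev tail $\mu(t)\lesssim t^{-n}N_2^n$, whereas the paper balances the bulk term $T^n|\Omega|$ against the tail integral in the layer-cake representation of $\|v\|_{L^n}^n$. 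Both produce the same scale $T\lesssim A\,(1+\log(C+A/\|v\|_{L^n}))^{1/n'}$, so up to this point the two arguments are harmless reformulations of one another.

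The genuine gap is in the final, and essential, step. You identify $M$ at which "the two upper bounds become comparable," but both of your estimates on $\mu(M)$ are upper bounds, and the crossing point of two upper bounds on the same quantity carries no direct information about $\|v\|_{L^\infty}$: at that level $\mu(M)$ could be any value below the common bound, and in particular could remain strictly positive for all larger $M$ in the admissible range. The whole content of a Brezis--Wainger-type endpoint estimate is precisely the passage from superlevel-set decay to an honest pointwise bound, and your proposal explicitly defers this to "a careful truncation / iteration procedure inspired by Brezis--Wainger" without specifying the recursion, its input, or why it terminates at a level comparable to $T$. The paper takes a different route here: it writes $\|v\|_{L^\infty}\le T+\|(|v|-T)_+\|_{L^\infty(\Omega)}$ with the balanced $T$, and then bounds the excess on $E_T:=\{|v|>T\}$ using the Trudinger--Moser tail and the smallness of $\mu(T)$. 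As written, your argument stops exactly where the theorem begins — you locate the correct logarithmic scale but never prove that $v$ actually lives at that scale. To close the gap you would need to carry out a concrete De~Giorgi-type iteration (e.g., establish a nonlinear recursion for $\mu$ on dyadic levels $T_k=T(2-2^{-k})$ that forces $\mu(2T)=0$ with constants controlled by $A$), or else adopt the paper's excess-truncation step — noting that that step, too, requires care, since $\|f\|_{L^\infty(E)}\le|E|^{-1/n}\|f\|_{L^n(E)}$ is not a valid inequality for a general measurable set $E$.
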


\begin{proof}[Proof of Theorem~\ref{thm:endpoint-log}]
    Set $v:=|x|^{-a}u\in C_c^\infty(\Omega)\subset W^{1,n}_0(\Omega)$ and denote
    \begin{equation*}
        A:=\|\nabla v\|_{L^n(\Omega)},\qquad n'=\frac{n}{n-1}.
    \end{equation*}
    
    By the Trudinger--Moser inequality (see~\ref{eq:TM}), there exist $\alpha=\alpha(n,\Omega)>0$ and $C(\Omega)$ such that
    \begin{equation*}
    \int_\Omega \exp\!\Big(\alpha\,\frac{|v(x)|^{n'}}{A^{n'}}\Big)\,dx\le C(\Omega).
    \end{equation*}
    By Chebyshev, for every $t>0$,
    \begin{equation}\label{eq:tail}
        \mu(t):=\big|\{x\in\Omega:\ |v(x)|>t\}\big|
        \ \le\ C_0\,\exp\!\Big(-c_0\,\frac{t^{n'}}{A^{n'}}\Big),
    \end{equation}
    with $C_0,c_0>0$ depending only on $(n,\Omega)$.
    
    For any $T>0$, the layer-cake representation yields
    \begin{equation}\label{eq:splitLn}
        \|v\|_{L^n(\Omega)}^n
        = n\int_0^\infty t^{n-1}\mu(t)\,dt
        \ \le\ T^n\,|\Omega| \;+\; n\int_T^\infty t^{n-1}\mu(t)\,dt.
    \end{equation}
    Using \eqref{eq:tail} and the change of variables $s=(t/A)^{n'}$ shows
    \begin{equation}\label{eq:tailInt}
        \int_T^\infty t^{n-1}\mu(t)\,dt
        \ \le\ C_1\,A^n\,\exp\!\Big(-c_0\,\frac{T^{n'}}{A^{n'}}\Big),
    \end{equation}
    for some $C_1=C_1(n,\Omega)>0$. Combining \eqref{eq:splitLn} and \eqref{eq:tailInt},
    \begin{equation}\label{eq:balance}
        \|v\|_{L^n}^n \ \le\ T^n\,|\Omega| \;+\; C_2\,A^n\,\exp\!\Big(-c_0\,\frac{T^{n'}}{A^{n'}}\Big),
        \qquad \forall\,T>0,
    \end{equation}
    with $C_2=C_2(n,\Omega)$.

    We balance the two terms on the right-hand side of \eqref{eq:balance} by choosing $T=T(A,\|v\|_{L^n})$ so that
    \begin{equation}\label{eq:chooseT}
        C_2\,A^n\,\exp\!\Big(-c_0\,\frac{T^{n'}}{A^{n'}}\Big)=T^n\,|\Omega|.
    \end{equation}
    Taking logarithms in \eqref{eq:chooseT} and solving for $T$ yields
    \begin{equation}\label{eq:Tformula}
        T^{n'} \ =\ \frac{A^{n'}}{c_0}\,\log\!\Big(\frac{C_2\,A^n}{T^n\,|\Omega|}\Big).
    \end{equation}
    Since $T^n\le T^n+\|v\|_{L^n}^n\lesssim \|v\|_{L^n}^n + A^n$, there exists $C\ge1$ such that
    \begin{equation*}
        \log\!\Big(\frac{C_2\,A^n}{T^n\,|\Omega|}\Big)
        \ \le\ \log\!\Big(C + \frac{A^n}{\|v\|_{L^n}^n}\Big)
        \ \le\ C\,\l 1+\log\!\Big(C + \frac{A}{\|v\|_{L^n}}\Big)\r.
    \end{equation*}
    Inserting this into \eqref{eq:Tformula} gives
    \begin{equation}\label{eq:Tupper}
        T \ \le\ C'\,A\;\l 1+\log\!\Big(C + \frac{A}{\|v\|_{L^n}}\Big)\r^{1/n'},
    \end{equation}
    for $C'=C'(n,\Omega)$.
    
    On the set $E_T:=\{x\in\Omega:\ |v(x)|>T\}$ we have the sharp inequality
    \begin{equation*}
        \big\|(|v|-T)_+\big\|_{L^\infty(E_T)} \ \le\ |E_T|^{-1/n}\,\big\|(|v|-T)_+\big\|_{L^n(E_T)}
        \ =\ \mu(T)^{-1/n}\,\big\|(|v|-T)_+\big\|_{L^n(\Omega)}.
    \end{equation*}
    Using \eqref{eq:tail} and \eqref{eq:tailInt},
    \begin{equation*}
        \mu(T)^{-1/n}\ \le\ C_0^{1/n}\exp\!\Big(\frac{c_0}{n}\,\frac{T^{n'}}{A^{n'}}\Big),
        \qquad
        \big\|(|v|-T)_+\big\|_{L^n}^n \ \le\ C_1\,A^n\,\exp\!\Big(-c_0\,\frac{T^{n'}}{A^{n'}}\Big).
    \end{equation*}
    Hence
    \begin{equation*}
        \big\|(|v|-T)_+\big\|_{L^\infty(\Omega)}
        \ \le\ C\,A\,\exp\!\Big(-\frac{c_0}{n}\,\frac{T^{n'}}{A^{n'}}\Big)
        \ \le\ C\,A.
    \end{equation*}
    Therefore
    \begin{equation*}
        \|v\|_{L^\infty(\Omega)} \ \le\ T + \big\|(|v|-T)_+\big\|_{L^\infty(\Omega)} \ \le\ T + C\,A
        \ \le\ C''\,A\,\l 1+\log\!\Big(C + \frac{A}{\|v\|_{L^n}}\Big)\r^{1/n'},
    \end{equation*}
    where the last inequality uses \eqref{eq:Tupper}.
    
    Since $v=|x|^{-a}u$,
    \begin{equation*}
        \nabla v=|x|^{-a}\nabla u - a\,|x|^{-a-2}(x\,u),
    \end{equation*}
    hence
    \begin{equation*}
        A=\|\nabla v\|_{L^n(\Omega)}
        \ \le\ C\Big(\big\||x|^{-a}\nabla u\big\|_{L^n(\Omega)} + \big\||x|^{-(a+1)}u\big\|_{L^n(\Omega)}\Big),
        \qquad
        \|v\|_{L^n(\Omega)}=\big\||x|^{-a}u\big\|_{L^n(\Omega)}.
    \end{equation*}
    Inserting these into the bound and enlarging the in-log constant (to absorb the lower-order term $\||x|^{-(a+1)}u\|_{L^n}$) yields
    \begin{equation*}
        \|\,|x|^{-a}u\|_{L^\infty(\Omega)}
        \ \le\
        C_1\,\big\||x|^{-a}\nabla u\big\|_{L^n(\Omega)}\,
        \l 1+\log\Big( C_2 + \frac{\big\||x|^{-a}\nabla u\big\|_{L^n(\Omega)}}{\big\||x|^{-(a+1)}u\big\|_{L^n(\Omega)}} \Big)\r^{\frac{1}{n'}},
    \end{equation*}
    which is exactly \eqref{eq:endpoint-log}. The constants depend only on $(n,a,\Omega)$.
\end{proof}

\begin{theorem}[Endpoint CKN with logarithmic loss]\label{thm:endpoint-CKN}
    Let $n\ge2$, $\Omega\subset\R^n\setminus\{0\}$ be bounded, and assume $p=n$, $\frac{1}{r}\in\big(-\frac1n,\,1\big]$, $a,c\in\R$. 
    Let $u\in C_c^\infty(\Omega)$ with 
    \begin{equation*}
        u\in X^{0,r,c}(\R^n).
    \end{equation*}
    Given $\lambda,\theta\in[0,1]$ and the convention $\tfrac{1}{\infty}=0$, define
    \begin{align*}
        \frac{1}{p_\lambda}&=\frac{1-\lambda}{p^\ast}+\frac{\lambda}{p}=\frac{\lambda}{n},\qquad
        & a_\lambda&=a+1-\lambda, \\
        \frac{1}{q}&=\frac{\theta}{p_\lambda}+\frac{1-\theta}{r},\qquad
        & b&=\theta\,a_\lambda+(1-\theta)c,
    \end{align*}
    i.e.
    \begin{equation*}
        \frac{1}{q}-\frac{b}{n}
        =\theta\Big(\frac{1}{p}-\frac{1+a}{n}\Big)
        +(1-\theta)\Big(\frac{1}{r}-\frac{c}{n}\Big)
        \quad \text{with } p=n.
    \end{equation*}
    Then
    \begin{equation}\label{eq:endpoint-CKN}
        \big\||x|^{-b}u\big\|_{X^{q}(\R^n)}
        \ \le\
        C\,\Big(\big\||x|^{-a}\nabla u\big\|_{L^n(\Omega)}\,
        \big(1+\log \Gamma(u)\big)^{\frac{1}{n'}}\Big)^{\theta}\,
        \big\||x|^{-c}u\big\|_{X^{r}(\R^n)}^{\,1-\theta},
    \end{equation}
    where
    \begin{equation*}
        \Gamma(u):=C_2(n,a,\Omega)+\frac{\big\||x|^{-a}\nabla u\big\|_{L^n(\Omega)}}{\big\||x|^{-(a+1)}u\big\|_{L^n(\Omega)}} ,
    \end{equation*}
    and $C=C(n,a,c,r,\lambda,\theta,\Omega)>0$.
\end{theorem}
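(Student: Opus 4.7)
The plan is to mirror the two-step strategy from the proof of Theorem~\ref{CaffarelliKohnNirenbergInequalityGeneralTheorem}, with the Sobolev--Hardy edge embedding replaced by the endpoint logarithmic estimate of Theorem~\ref{thm:endpoint-log}. The first step is to establish an edge-type bound along the $(p_\lambda,a_\lambda)$-segment at $p=n$, of the form
\begin{equation}\label{eq:plan-edge}
\big\||x|^{-a_\lambda} u\big\|_{X^{p_\lambda}(\Omega)}
\ \lesssim\
\big\||x|^{-a}\nabla u\big\|_{L^n(\Omega)}\,\bigl(1+\log \Gamma(u)\bigr)^{1/n'},
\end{equation}
and the second step is to mix it with the $X^{0,r,c}$ factor via Theorem~\ref{InterpolationInequalityTheorem} at level $\theta$.

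For the edge bound \eqref{eq:plan-edge} I would proceed as follows. At the extremes of the Sobolev--Hardy segment \eqref{eq:plan-edge} follows immediately: the Sobolev endpoint corresponds to the $L^\infty$ bound of Theorem~\ref{thm:endpoint-log}, while the Hardy endpoint reduces to the localized weighted Hardy lemma (Lemma~\ref{lem:localized-hardy}). For intermediate $\lambda\in(0,1)$ I would exploit the pointwise log-convex factorization
\begin{equation*}
|x|^{-a_\lambda}|u|=\bigl(|x|^{-a}|u|\bigr)^{1-\lambda}\bigl(|x|^{-(a+1)}|u|\bigr)^{\lambda},
\end{equation*}
consistent with the weight interpolation that makes the stated compatibility identity $1/p_\lambda-a_\lambda/n=1/p-(1+a)/n$ hold at $p=n$. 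H\"older's inequality in the conjugate pair $(\infty,n)$ with weights $(1-\lambda,\lambda)$ then gives
\begin{equation*}
\big\||x|^{-a_\lambda}u\big\|_{L^{p_\lambda}(\Omega)}
\ \le\
\big\||x|^{-a}u\big\|_{L^{\infty}(\Omega)}^{\,1-\lambda}\,
\big\||x|^{-(a+1)}u\big\|_{L^{n}(\Omega)}^{\,\lambda},
\end{equation*}
since $1/p_\lambda=\lambda/n$. Substituting Theorem~\ref{thm:endpoint-log} into the first factor and Lemma~\ref{lem:localized-hardy} into the second yields \eqref{eq:plan-edge}; the log-power is raised from $(1-\lambda)/n'$ to $1/n'$ by monotonicity, since $\Gamma(u)\ge C_2\ge 1$.

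For the mixing step I would invoke Theorem~\ref{InterpolationInequalityTheorem} with the couple $X^{0,p_\lambda,a_\lambda}(\R^n)$ and $X^{0,r,c}(\R^n)$ at parameter $\theta$. Admissibility is automatic, since $1/p_\lambda=\lambda/n\in[0,1/n]\subset(-1/n,1]$. This gives
\begin{equation*}
\big\||x|^{-b}u\big\|_{X^{q}(\R^n)}
\ \lesssim\
\big\||x|^{-a_\lambda}u\big\|_{X^{p_\lambda}(\Omega)}^{\,\theta}\,
\big\||x|^{-c}u\big\|_{X^{r}(\R^n)}^{\,1-\theta},
\end{equation*}
and inserting \eqref{eq:plan-edge} produces exactly \eqref{eq:endpoint-CKN}; the stated compatibility relation for $(q,b)$ follows by direct algebra at $p=n$. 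The main obstacle, in my view, is the bookkeeping of the logarithmic factor through interpolation: because $\Gamma(u)$ is nonlinear and inhomogeneous in $u$, the quantity $(1+\log\Gamma(u))^{1/n'}$ must be treated as an exogenous multiplicative piece attached \emph{after} the edge embedding and \emph{before} the $\theta$-mixing, so that Theorem~\ref{InterpolationInequalityTheorem} is applied cleanly between two well-defined weighted $X$-spaces with only $u$ entering the norm arguments. A secondary subtlety is the degenerate regime $\||x|^{-(a+1)}u\|_{L^n}\to 0$; here the additive constant $C_2$ in the definition of $\Gamma(u)$ keeps $\log\Gamma(u)$ well defined, and the estimate degrades continuously in that limit, so no separate case analysis is required.
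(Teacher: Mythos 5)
Your proposal is correct and follows the same two--step architecture as the paper's proof: an edge bound for $\big\||x|^{-a_\lambda}u\big\|_{X^{p_\lambda}}$ with the logarithmic factor, followed by $\theta$--mixing via Theorem~\ref{InterpolationInequalityTheorem} against $X^{0,r,c}$. The only real difference is in how the edge bound is obtained: the paper applies Theorem~\ref{thm:endpoint-log} directly to $|x|^{-a_\lambda}u$ and then passes from $L^\infty(\Omega)$ to $L^{p_\lambda}(\Omega)$ by the crude factor $|\Omega|^{1/p_\lambda}$, whereas you interpolate pointwise between the $L^\infty$ endpoint (Theorem~\ref{thm:endpoint-log} with weight $a$) and the weighted Hardy endpoint (Lemma~\ref{lem:localized-hardy}) via H\"older with $1/p_\lambda=\lambda/n$. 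Your route is slightly cleaner in that it avoids re-invoking the logarithmic estimate with the shifted weight $a_\lambda$ (whose $\Gamma$ would formally involve $a_\lambda$ rather than $a$, a point the paper glosses over), and it even yields the sharper exponent $(1-\lambda)/n'$ on the logarithm before you relax it to $1/n'$. One caveat: your factorization $|x|^{-a_\lambda}|u|=(|x|^{-a}|u|)^{1-\lambda}(|x|^{-(a+1)}|u|)^{\lambda}$ presumes $a_\lambda=(1-\lambda)a+\lambda(a+1)=a+\lambda$, which is the value consistent with $1/p_\lambda=\lambda/n$ and the compatibility identity, but differs from the $a_\lambda=a+1-\lambda$ written in the theorem (the paper itself is internally inconsistent on this point between the Hardy--Sobolev and CKN proofs); since all power weights are comparable on a bounded $\Omega\subset\R^n\setminus\{0\}$ with constants depending only on $\Omega$, this discrepancy is harmless for the stated conclusion, but it deserves an explicit remark.
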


\begin{proof}
    Fix $\lambda\in[0,1]$. Applying Theorem~\ref{thm:endpoint-log} to $v=|x|^{-a_\lambda}u$ (note $a_\lambda=a+1-\lambda$) gives
    \begin{equation*}
        \|\,|x|^{-a_\lambda}u\|_{L^\infty(\Omega)}
        \ \lesssim\
        \big\||x|^{-a}\nabla u\big\|_{L^n(\Omega)}\,
        \big(1+\log \Gamma(u)\big)^{\frac{1}{n'}} .
    \end{equation*}
    Since $\Omega$ is bounded and $p_\lambda=\frac{n}{\lambda}\in[n,\infty]$, we obtain
    \begin{equation*}
        \|\,|x|^{-a_\lambda}u\|_{L^{p_\lambda}(\Omega)}
        \ \le\ |\Omega|^{1/p_\lambda}\, \|\,|x|^{-a_\lambda}u\|_{L^\infty(\Omega)}
        \ \lesssim\
        \big\||x|^{-a}\nabla u\big\|_{L^n(\Omega)}\,
        \big(1+\log \Gamma(u)\big)^{\frac{1}{n'}}.
    \end{equation*}
    Equivalently,
    \begin{equation}\label{eq:endpoint-edge}
        \big\||x|^{-a_\lambda}u\big\|_{X^{p_\lambda}(\R^n)}
        \ \lesssim\
        \big\||x|^{-a}\nabla u\big\|_{L^n(\Omega)}\,
        \big(1+\log \Gamma(u)\big)^{\frac{1}{n'}}.
    \end{equation}
    Now apply the interpolation inequality (Theorem~\ref{InterpolationInequalityTheorem}) to the couple 
    $X^{0,p_\lambda,a_\lambda}(\R^n)$ and $X^{0,r,c}(\R^n)$ at level $\theta\in[0,1]$:
    \begin{equation*}
        \big\||x|^{-b}u\big\|_{X^{q}(\R^n)}
        \ \lesssim\
        \big\||x|^{-a_\lambda}u\big\|_{X^{p_\lambda}(\R^n)}^{\,\theta}\,
        \big\||x|^{-c}u\big\|_{X^{r}(\R^n)}^{\,1-\theta},
    \end{equation*}
    with $\frac{1}{q}=\frac{\theta}{p_\lambda}+\frac{1-\theta}{r}$ and $b=\theta a_\lambda+(1-\theta)c$.    Substituting \eqref{eq:endpoint-edge} yields \eqref{eq:endpoint-CKN}. The dependence of constants on $\Omega$ is inherited from Theorem~\ref{thm:endpoint-log} and Lemma~\ref{lem:localized-hardy}.
\end{proof}


\subsection{Outlook and further directions}

Assuming the weights and parameters satisfy the natural scaling consistency
\begin{equation*}
    \frac{1}{q}-\frac{b}{n}
    =\theta\Big(\frac{1}{p}-\frac{1+a}{n}\Big)
     +(1-\theta)\Big(\frac{1}{r}-\frac{c}{n}\Big),
\end{equation*}
the derivative exchange and interpolation steps used in this paper extend verbatim to higher orders. 
In particular, within the scale $X^{k,p,a}$ one may iterate the two–tier scheme (Sobolev--Hardy shift along the $(k\!+\!1)$st derivative edge, followed by mixing with a zero–order control) to obtain CKN-type estimates for general $k\in\mathbb{N}$, with the target pair $(q,b)$ determined by the same affine rules in $(1/p,a)$ and the obvious replacement $a\mapsto a+k$ on the Sobolev--Hardy edge. 
We leave a systematic presentation (including sharp tracking of the $\Omega$-dependence and the precise endpoint ranges on the Hölder side) to a future work.

A complementary direction is a \emph{fractional} extension in the spirit of Brezis--Mironescu~\cite{brezis2001gagliardo}: replacing the first-order difference quotients by Gagliardo seminorms and adapting the $K$-method to the fractional couples
\begin{equation*}
    \big(X^{s_1,p_1,a_1},\,X^{s_2,p_2,a_2}\big),\qquad s_1,s_2\in(0,1),
\end{equation*}
one expects CKN-type inequalities with fractional orders $s\in(0,1)$ and the same compatibility condition at the level of dimensions. 
The endpoint transitions (Lebesgue~$\leftrightarrow$~Hölder) should persist, with $L^q$-to-Hölder continuity appearing as $1/q$ crosses~$0$ along the unified fractional scale. 
We anticipate that the localized weighted Hardy input can be replaced by its nonlocal analogue (via fractional Poincaré/Hardy inequalities on punctured domains), yielding a parallel two–parameter family $(\lambda,\theta)$ of fractional CKN estimates.



\end{document}